\setlist[enumerate]{leftmargin=.5in}
\setlist[itemize]{leftmargin=.5in}
\title{Phase-Amplitude Description of Stochastic Oscillators: A Parameterization Method Approach.
	\thanks{Submitted to the editors DATE.
		\funding{APC acknowledges support from Spanish Ministry of Science and Innovation grants (Projects No. PID2021-124047NB-I00 and PID-2021-122954NB-100). This work was supported in part by NSF grant DMS-2052109 to PT. 
			This material is based upon work supported in part by the National Science Foundation under Grant No. DMS-1929284 while PT was in residence at the Institute for Computational and Experimental Research in Mathematics in Providence, RI, during the ``Math + Neuroscience: Strengthening the Interplay Between Theory and Mathematics" program.}}}
\author{Alberto P\'{e}rez-Cervera\thanks{Departament de Matemàtiques, Universitat Politècnica de Catalunya, Barcelona, Spain (\email{alberto.perez@upc.edu})}
	\and Benjamin Lindner\thanks{Bernstein Center for Computational Neuroscience and Department of Physics, Humboldt University, Berlin, Germany (\email{benjamin.lindner@physik.hu-berlin.de})}
	\and Peter J.~Thomas\thanks{Department of Mathematics, Applied Mathematics and Statistics, Case Western Reserve University, Cleveland, Ohio, USA (\email{pjthomas@case.edu})}
}
\DeclareMathOperator{\diag}{diag}
\newcommand{\mbX}{\mathbf{X}}
\newcommand{\mbY}{\mathbf{Y}}
\newcommand{\mbx}{\mathbf{x}}
\newcommand{\mby}{\mathbf{y}}
\newcommand{\mbf}{\mathbf{f}}
\newcommand{\mbF}{\mathbf{F}}
\newcommand{\mbg}{\mathbf{g}}
\newcommand{\BSigma}{\overline{\Sigma}}
\newcommand{\bsigma}{\overline{\sigma}}
\newcommand{\E}{\mathbb{E}}
\newcommand{\LL}{\mathcal{L}}
\newcommand{\LLd}{\mathcal{L}^\dagger}
\newcommand{\tbar}{\overline{T}}
\newcommand{\lr}[1]{\left\langle #1 \right\rangle}
\def\given{\:|\:}
\newcommand{\pt}[1]{{\color{violet} #1}}
\newcommand{\bl}[1]{{\color{blue} #1}}
\newcommand{\corr}[1]{{\color{red} #1}}
\begin{document}
\nolinenumbers
\maketitle

\begin{abstract}
The parameterization method (PM) provides a broad theoretical and numerical foundation for computing invariant manifolds of dynamical systems. PM implements a change of variables in order to represent trajectories of a system of ordinary differential equations ``as simply as possible." 
In this paper we pursue a similar goal for stochastic oscillator systems.
For planar nonlinear stochastic systems that are ``robustly oscillatory", we find a change of variables through which
the dynamics are as simple as possible \emph{in the mean}.  
We prove existence and uniqueness of a deterministic vector field, the trajectories of which capture the local mean behavior of the stochastic oscillator.
We illustrate the construction of such an ``effective vector field" for several examples, including a limit cycle oscillator perturbed by noise, an excitable system derived from a spiking neuron model, and a spiral sink with noise forcing (2D Ornstein-Uhlenbeck process).  
The latter examples comprise contingent oscillators that would not sustain rhythmic activity without noise forcing.  
Finally, we exploit the simplicity of the dynamics after the change of variables to obtain the effective diffusion constant of the resulting phase variable, and the stationary variance of the resulting amplitude (isostable) variable.

	
\end{abstract}

\begin{keywords}
Augmented phase reduction, Stochastic isostables, Stochastic limit cycle, Phase diffusion, Effective vector field
\end{keywords}

\begin{MSCcodes}
00A69, 37H10, 37A50, 70K99
\end{MSCcodes}

\section{Introduction}



The parameterization method (PM) is a technique  for  studying  the invariant manifolds of dynamical systems \cite{cabre2003parameterization, cabre2003parameterization2,cabre2005parameterization, haro2016parameterization}. 
One of its main advantages is that it seeks a parameterization of such invariant manifolds using  variables in which the dynamics over such manifold  are expressed \emph{as simply as possible}, for example with rates of change that are constant or linear in the transformed variables \cite{guillamon2009computational}. 
The PM applies in many different contexts, including  invariant tori in hamiltonian
systems, delay differential equations and celestial mechanics, among others \cite{he2016construction, haro2021flow, kumar2022rapid}.  
Of particular relevance to this paper, it facilitates the computation of invariant manifolds of periodic orbits \cite{huguet2013computation,castelli2015parameterization, perez2020global}. 
In a deterministic dynamical system, stable oscillations correspond to attracting limit cycles (LC) in the phase space. 
If one assumes the dynamics are close enough to the attracting LC, one can describe the LC dynamics by a single variable: its phase \cite{winfree1974patterns,guckenheimer1975}. 
This \emph{phase reduction} approach has been very successful in describing weakly forced or weakly coupled LC systems \cite{hoppensteadt2012}. 

The theoretical validity of the phase reduction approach requires trajectories to stay sufficiently close to the LC. 
In this context, the PM was foreseen by Guillamon and Huguet in \cite{guillamon2009computational} as a way of overcoming the limitations of the so-called phase approach. According to the key idea of PM, the attracting manifold of the LC can be parameterized in terms of a set of variables whose dynamics are ``as simply as possible'': for planar systems, the phase and one other variable, denoted as \textit{amplitude}, that moves along a direction transverse to the limit cycle. 
This set of variables, analogous to action-angle variables, is known as phase-amplitude. During the last decade, many studies have shown how the addition of this additional amplitude variable provides an essentially complete understanding of oscillatory dynamics \cite{castejon2020phase,castejon2013phase,monga2019phase,shirasaka2017phase,moehliswilsonpre2016,wilson2018greater}. 
Indeed, the relationship of the phase and amplitude with the eigenfunctions of the Koopman operator has been noted, thus leading to an interesting body of theory from the Koopman perspective \cite{mauroy2018global,mauroy2013isostables,shirasaka2020phase}.\footnote{Indeed, the amplitude variable and its level sets are denoted as \textit{isostables} in the Koopman literature \cite{mauroy2013isostables}.} 
Recent advances of phase-amplitude description of oscillatory systems include higher dimensional cases \cite{perez2020global,Wilson2020}, coupled systems \cite{ermentrout2019recent,nicks2024insights,park2021high,wilson2019augmented} and piecewise-smooth oscillators \cite{coombes2023oscillatory,wilson2019isostable}.

Due to the success of the phase-amplitude variables in LC systems, it is natural to ask about the possible advantages of using an analogous set of variables to gain understanding in the important case of stochastic oscillations. 
In this paper, we will assemble recent advances in stochastic phase-amplitude functions \cite{cao2020partial,kato2021asymptotic,perez2022quantitative,perez2021isostables,schwabedal2013phase,thomas2014asymptotic}, to obtain a phase-amplitude transformation such that the stochastic dynamics are expressed ``as simply as possible". While our construction is formally analogous \textit{in the mean} to the deterministic phase-amplitude description of LC, it also applies to noise-induced oscillators: that is, to systems which despite not having an underlying LC in the absence of noise, do show stochastic oscillations because of it.

Furthermore, thanks to the mathematical simplicity of our construction, we present novel results. 
We prove the existence and uniqueness of a vector field -- that we call the \emph{effective vector field} -- that captures how the noise effectively modifies the underlying vector field to generate oscillations, even in cases where oscillations would not persist without noise. 
As our numerical examples show, the trajectories generated by the effective vector field provide an insightful interpretation of the system's dynamics.
Finally, we also derive novel expressions for the phase diffusion constant and the stationary variance of the amplitude.



Our manuscript is structured as follows.
In \S \ref{sec:section2} we briefly recall the phase-amplitude description of LC. 
In \S \ref{sec:section3} we show that, because of the fundamental differences between deterministic and stochastic dynamics, the deterministic phase and amplitude variables no longer capture the dynamics ``as simply as possible" once white Gaussian noise is introduced in the system. 
For this reason, in \S \ref{sec:section4}, we review a recently developed set of phase-amplitude functions, and show that these new phase-amplitude coordinates satisfy, under an appropriate averaging procedure, a natural simplicity criterion.
In \S \ref{sec:section5} we derive the new results in this paper, building upon the new phase-amplitude construction: namely, in Theorem \ref{thm:bigthm} we introduce the effective vector field and prove existence and uniqueness. 
In \S~\ref{sec:variances}, we derive  exact expressions for the phase diffusion constant and the stationary variance of the isostable coordinate. 
We illustrate the validity of these results by applying them to some numerical examples in \S \ref{sec:section6}. 
Finally, we conclude the paper with a discussion of the results and their possible extension in $d>2$ dimensions in \S \ref{sec:discussion}.

	\section{The Parameterization Method for Periodic Orbits}\label{sec:section2} 
    %
    Consider an autonomous system of ODEs 
	\begin{align}\label{eq:mathDef_1}
		\dot{\mbx} = \mbf(\mbx), \quad \mbx \in \mathbb{R}^{d},
	\end{align}
	whose flow is denoted by $\phi_t(\mbx)$. 
 Assume that $\mbf$ is an analytic vector field and that system \eqref{eq:mathDef_1} has a $T$-periodic hyperbolic attracting limit cycle $\Gamma$ with nontrivial Floquet exponents $\lambda_i < 0$ ($i=1,\dots,d-1$). The parameterization method guarantees the existence of the local analytical diffeomorphism \cite{cabre2005parameterization} 
	\begin{equation}\label{eq:kThetaSigma}
		\begin{aligned}
			K : \mathbb{T} \times \mathbb{R}^{d-1} &\rightarrow \mathbb{R}^d \\
			(\theta, \sigma) &\rightarrow K(\theta, \sigma), 
		\end{aligned}
	\end{equation}
	such that the dynamics of system \eqref{eq:mathDef_1} can be expressed in the simple form
	\begin{equation}\label{eq:phaseAmplitude}
		\dot{\theta} = \frac{2 \pi}{T}, \quad \quad \quad  \dot{\sigma}_i = \Lambda \sigma_i,
	\end{equation} 
	with $\Lambda = \diag(\lambda_1, \dots, \lambda_{d-1})$ and periodic boundary conditions for $\theta$. 
    Therefore, after performing the change of coordinates $\mbx = K(\theta, \sigma)$ in \eqref{eq:kThetaSigma}, the dynamics of the system \eqref{eq:mathDef_1} consists of a rigid rotation with constant velocity $2 \pi/T$ plus a contraction with exponential rates $\lambda_i$. 
    Indeed, the evolution of the flow $\phi_t(\mbx)$ in the $\theta, \sigma$ coordinates satisfies the following invariance equation
	\begin{equation}
		\phi_t(K(\theta, \sigma)) = K\left(\left[\theta + \frac{2\pi t}{T},\hspace{-3mm}\mod 2\pi\right], \sigma_i e^{\lambda_i t}\right) .
	\end{equation}

\begin{remark}\label{rm:wrapped-and-unwrapped}
Throughout the paper we will use $\theta\in\mathbb{T}\equiv[0,2\pi)$ to represent the phase on the circle, and $\vartheta\in\mathbb{R}$ to represent the \emph{unwrapped} phase on the real line, with the identification $\theta=\vartheta\mod 2\pi$.
We note that  $e^{i\theta}\equiv e^{i\vartheta}$, and  that $(\vartheta,\sigma)$ also obey the  differential equation \eqref{eq:phaseAmplitude} with natural rather than periodic boundary conditions.    
\end{remark}
 
 The map $K$ in  \eqref{eq:kThetaSigma} allows one to define a scalar function $\Psi(\mbx)$ that assigns a phase $\theta$ to any point $\mbx$ in any neighbourhood $\Omega$ of the limit cycle, contained in the basin of attraction (the stable manifold) of $\Gamma$. 
	\begin{equation}\label{eq:thetaFunc}
		\begin{aligned}
			\Psi:\Omega \subset \mathbb{R}^{d} &\to \mathbb{T},\\
			\mbx &\mapsto \Psi(\mbx) = \theta,
		\end{aligned}
	\end{equation}
	whose level curves correspond to the isochrons $\mathcal{I}_{\theta}$ 
	\begin{equation}\label{eq:isochronsDef}
		\mathcal{I}_{\theta} = \{  \mbx \in \Omega \: \mid \: \Psi(\mbx) = \theta  \} .
	\end{equation}
	
	Analogously, the map $K$ in \eqref{eq:kThetaSigma} also allows to define the scalar function $\Sigma_i$, assigning an amplitude variable $\Sigma_i$ to any point $\mbx \in$ $\Omega$: 
	\begin{equation}\label{eq:sigmaFunction}
		\begin{aligned}
			\Sigma_i : \Omega \subset \mathbb{R}^{d} &\to \mathbb{R},\\
			\mbx &\mapsto \Sigma_i(\mbx) = \Sigma_i,
		\end{aligned}
	\end{equation}
	whose level curves (see \cite{mauroy2018global, castejon2013phase}) are known as isostables $\mathcal{I}_{\sigma_i}$
	\begin{equation}\label{eq:isostablesDef}
		\mathcal{I}_{\sigma_i} = \{ \mbx \in \Omega \: \mid \: \Sigma_i(\mbx) = \sigma_i \}.
	\end{equation}
	
	Finally, note that, along trajectories, $\Psi(\mbx)$ and $\Sigma_i(\mbx)$ satisfy 
	\begin{equation}\label{eq:detFlow}
		\Psi(\phi_t(\mbx)) = \Psi(\mbx) + \frac{2 \pi t}{T} \qquad \quad \Sigma_i(\phi_t(\mbx)) = \Sigma_i(\mbx)e^{\lambda_i t}.
	\end{equation}



	
	\section{Global change of variables meets It\^{o}'s formula}\label{sec:section3}
	
	Next, we consider the following stochastic differential equation (SDE)\footnote{We now drop the assumption made in \S\ref{sec:section2} that \eqref{eq:mathDef_1} constitutes a limit cycle system.  For our governing assumptions in the stochastic case, see \S\ref{subsec:eigenfunctions-and-assumptions} and Definition \ref{def:rob-osc-crit}. }
	\begin{equation}\label{eq:SDE}
		\dot{\mbX} =\mbf(\mbX) + \mbg(\mbX)\,\xi(t), \quad \mbX \in \mathbb{R}^{d} ,
	\end{equation}
	where $\mbX = \mbX(t)$, $\mbf$ is a $d$-dimensional vector, $\mbg$ is a $d\times N$ matrix and $\xi(t)$ is white Gaussian noise $N$-dimensional vector with uncorrelated components $\langle \xi_i(t)\xi_j(t') \rangle = \delta(t-t')\delta_{ij}$. We interpret the multiplicative noise in \eqref{eq:SDE} in the sense of It\^{o} unless otherwise specified.
	
	
	Changing from deterministic to stochastic dynamics results in many fundamental changes: orbits are no longer periodic and the transit times between isochrons or other Poincar\'e sections are random variables \cite{schwabedal2013phase}. Moreover, the classical notion of a ``limit cycle", as a closed, isolated periodic orbit, is no longer well defined. Additionally, the change of variables does not follow standard practice but the It\^{o} rule \cite{Gar85}. Indeed, for any smooth function $\Phi(\mbx)$, It\^{o}'s lemma gives the increment in $\Phi$ as
	\begin{equation}\label{eq:itoRule} 
	\begin{aligned}
		\dot{\Phi}(\mbX)&=
		\sum_{j=1}^d \mbf_j(\mbX)\frac{\partial\Phi(\mbX)}{\partial \mbx_j}
		+
		\sum_{j,k=1}^d\mathcal{G}_{jk}(\mbX)\frac{\partial^2\Phi(\mbX)}{\partial \mbx_j \partial \mbx_k} +\sum_{j=1}^d\sum_{k=1}^N\frac{\partial\Phi(\mbX)}{\partial \mbx_j}\mbg_{jk}(\mbX)\xi_{k}(t),\\
		 &= \mathcal{L}^\dagger [\Phi(\mbX)] + \sum_{j=1}^d\sum_{k=1}^N\frac{\partial\Phi(\mbX)}{\partial \mbx_j}\mbg_{jk}(\mbX)\xi_{k}(t),
		\end{aligned}
	\end{equation} 
	with $ \mathcal{G}(\mbx)=\frac12 \mbg(\mbx)\mbg(\mbx)^\intercal$ and
    \begin{equation}\label{eq:ldagger}
    \mathcal{L}^\dagger = \mbf(\mbx)\cdot\nabla_\mbx+\mathcal{G}_{jk}(\mbx)\partial^2_{\mbx_j \mbx_k}\enskip,
    \end{equation}
    the Kolmogorov backwards operator.  We adopt the Einstein summation convention for repeated indices.

	The success of the  phase-amplitude transformation of LC systems motivates the search for a similar change of coordinates facilitating the analysis of the generic SDE \eqref{eq:SDE}. 
    However, because of the extra second derivative term in \eqref{eq:itoRule}, the transformation to the deterministic phase amplitude variables $\Psi(\mbx), \Sigma_i(\mbx)$ no longer has a simple expression.
    Indeed, along trajectories of the process \eqref{eq:SDE}, we have:
	\begin{equation}
		\begin{aligned}\label{eq:stocPhaseAmpVF}
			\dot{\Psi}(\mbX)=&~  \frac{2 \pi}{T} + \sum_{j,k=1}^d\mathcal{G}_{jk} (\mbX)\frac{\partial^2\Psi (\mbX)}{\partial \mbx_j\partial \mbx_k} +\sum_{j=1}^d\sum_{k=1}^N \mbg_{jk}(\mbX) \frac{\partial\Psi (\mbX)}{\partial \mbx_j}\,\xi_{k}(t)\enskip, \\
			\dot{\Sigma}_i(\mbX)=&~  \lambda_i\Sigma_i(\mbX) +  \sum_{j,k=1}^d\mathcal{G}_{jk}(\mbX) \frac{\partial^2\Sigma_i(\mbX)}{\partial \mbx_j\partial \mbx_k}+\sum_{j=1}^d\sum_{k=1}^N \mbg_{jk}(\mbX) \frac{\partial\Sigma_i(\mbX)}{\partial \mbx_j}\,\xi_{k}(t)\enskip,
		\end{aligned}
	\end{equation}	
	where $T$ is the period of the deterministic limit cycle \eqref{eq:mathDef_1}, and we used $\mbf(\mbx)\cdot \nabla\Psi(\mbx)=2 \pi /T$ and $\mbf(\mbx)\cdot\nabla\Sigma_i(\mbx)=\lambda_i \Sigma_i(\mbx)$ for all $\mbx = K(\theta, \sigma)$.
	
	\section{A Phase-Amplitude description of Stochastic Oscillators}\label{sec:section4}
	
	As Eq.~\eqref{eq:stocPhaseAmpVF} shows, for the SDE \eqref{eq:SDE}, neither the average local rate of change of the phase variable nor the isostable coordinate $\sigma_i$ satisfy the conditions $\langle d\theta\rangle=\frac{dt}{T}$ and $\langle d\sigma_i\rangle=\lambda_i \sigma_i\,dt$ that define them, not even ``on average". 
    In addition, as they rely on the deterministic phase-amplitude functions, equations in system \eqref{eq:stocPhaseAmpVF} are \textit{a priori} restricted to systems having an underlying limit cycle.
    Thus they are not applicable to the important and broad family of noise-induced oscillators, i.e.~systems in which the noise is fundamental for generating oscillations, such as quasicycle systems \cite{lugo2008quasicycles, bressloff2010metastable, wallace2011emergent, thomas2019phase},  excitable systems \cite{lindner2004effects, schwabedal2010effective, gutkin2022theta, zhu2022phase}, or noisy heteroclinic cycle systems \cite{coller1994control, stone1999noise, armbruster2003noisy, bakhtin2011noisy, horchler2015designing, ashwin2016quantifying, giner2017power, rouse2021visualization, barendregt2023heteroclinic, jeong2023effect}  . 

    Therefore, the expressions in system~\eqref{eq:stocPhaseAmpVF} could be interpreted as necessitating a redefinition of the concepts of ``asymptotic phase" and  ``isostables''. 
    Following the main idea of the Parameterization Method, we will look for the variables for which the dynamics are expressed, \emph{in the mean}, as simply as possible. 
    
    We now turn to the special case of planar\footnote{See \S\ref{sec:discussion} for discussion of possible extensions of the framework presented here beyond the planar case.} systems ($d=2$). 
    Let us consider the previous equation \eqref{eq:itoRule}
    \begin{equation}
		\dot{\Phi}(\mbX)= \mathcal{L}^\dagger [\Phi(\mbX)] + \sum_{j=1}^2\sum_{k=1}^N\frac{\partial\Phi(\mbX)}{\partial \mbx_j}\mbg_{jk}(\mbX)\xi_{k}(t),
	\end{equation} 
    and let us now average
    \begin{equation}\label{eq:dynkin-inf}
		\frac{d}{dt}{\E^{\mbx_0}[\Phi}(\mbX)]= \E^{\mbx_0}[\mathcal{L}^\dagger [\Phi(\mbX)]],
	\end{equation}
    where $\E^{\mbx_0}$ indicates the expectation value over an ensemble of trajectories starting from $\mbX(0) = \mbx_0$.
    This expression is the differential form of Dynkin's formula.\footnote{Dynkin's theorem (Øksendal 2007, \S7.4) may be expressed in integral form as follows:
    $E^{\mbx}\left[\phi(\mbX(\tau))\right]=\phi(\mbx)+E^{\mbx}\left[\int_0^\tau\mathcal{L}^\dagger\left[\phi(\mbX(s))\right]\,ds\right]$, where $\tau$ is any stopping time for which $E^\mbx[\tau]<\infty$, $\phi$ is any smooth function, and $E^\mbx$ denotes expectation with respect to the law of the process with initial condition $\mbx$.  
    }
    If we were now to find functions $\Theta(\mbx)$ and  $\BSigma(\mbx)$ satisfying 
    \begin{equation}\label{eq:ldaggerEq}
	    	\mathcal{L}^\dagger [\Theta(\mbx)] = \frac{2 \pi}{\tbar}, \qquad \quad \mathcal{L}^\dagger [\BSigma(\mbx)] = \lambda_\text{Floq} \BSigma(\mbx),
	\end{equation}
	with $\tbar, \lambda_\text{Floq} \in \mathbb{R}$, then using \eqref{eq:dynkin-inf} we would obtain
    \begin{equation}\label{eq:eq:stocPhaseAmpVF}
			\frac{d}{dt}\E^{\mbx_0}[\Theta(\mbX)]=~  \frac{2 \pi}{\tbar}, \qquad   
   \frac{d}{dt}\E^{\mbx_0}[\BSigma(\mbX)]= \lambda_\text{Floq} \E^{\mbx_0}[\BSigma(\mbX)].
    \end{equation}
    In these coordinates, the trajectory would be given simply as
    \begin{equation}\label{eq:phase-amp-avg}
    \E^{\mbx_0}[\Theta(\mbX)]=~  \frac{2 \pi t}{\tbar} + \Theta(\mbx_0), \qquad   
   \E^{\mbx_0}[\BSigma(\mbX)]= \BSigma(\mbx_0)e^{\lambda_\text{Floq}t}.
    \end{equation}
    Thus we propose these functions, $\Theta$ and $\BSigma$, as our stochastic phase-amplitude variables, since we see that upon averaging they satisfy expressions that formally resemble the deterministic PM version (see \eqref{eq:detFlow}).
    
\begin{remark}\label{rm:unwrapped-phase}
    When treating averages of phase variables, such as $\mathbb{E}^{\mbx_0}[\Theta(\mbX)]$ in Equation \eqref{eq:phase-amp-avg}, we have in mind the unwrapped phase $\vartheta\in\mathbb{R}$ rather than the wrapped phase  
    $\theta~\mod 2\pi\in\mathbb{T}$ (see Remark~\ref{rm:wrapped-and-unwrapped}).
\end{remark} 

    \subsection{Stochastic oscillators described by eigenfunctions}
    \label{subsec:eigenfunctions-and-assumptions}

    Finding the functions $\Theta(\mbx)$ and  $\BSigma(\mbx)$ (and their associated parameters $\tbar$ and $\lambda_\text{Floq}$) will require us to review recent advances in the field of stochastic oscillations \cite{perez2022quantitative,perez2023universal,perez2021isostables,thomas2014asymptotic}.  
    The key conceptual observation underlying these recent advances requires to approach stochastic oscillations by means of an ensemble of trajectories instead of focusing on individual trajectories. Therefore, we consider the conditional density
    \begin{equation}
		\rho(\mbx,t\given\mbx_0,s)=\frac{1}{|d\mbx|}\Pr\left\{\mbX(t)\in[\mbx,\mbx+d\mbx)\given \mbX(s)=\mbx_0 \right\}, \qquad \text{for} \enskip t>s,
	\end{equation} 
    whose evolution follows the Fokker-Planck equation
    \begin{equation}
    \label{eq:forward-eq}
		\frac{\partial}{\partial t}P(\mbx,t\given\mbx_0,s)=\LL[P]
        =-\nabla_\mbx\!\cdot\!\left( \mbf(\mbx) P \right) + \sum_{i,j} \frac{\partial^2}{\partial x_i x_j}\left(\mathcal{G}_{ij}(\mbx)P\right),
    \end{equation}
    where $\mathcal{G}=\frac12 \mbg\mbg^\intercal$ (see \eqref{eq:itoRule}). 
The formal adjoint of the operator $\LL$ is Kolmogorov's \emph{backward} operator $\LLd$ (also known as the generator of the Markov process \eqref{eq:SDE} \cite{vcrnjaric2019koopman,kato2021asymptotic}, and previously defined in \eqref{eq:ldagger} in the context of the It\^{o} rule), which satisfies the equation
\begin{equation}
  \label{eq:backward-eq}
		-\frac{\partial}{\partial s}P(\mbx,t\given\mbx_0,s)=\LLd[P]
  =\mbf(\mbx_0) \cdot \nabla_{\mbx_0}\!\left(P \right)+\sum_{i,j}\mathcal{G}_{ij}(\mbx_0)\frac{\partial^2 P}{\partial x_{0,i} x_{0,j}}.
\end{equation}
We will assume that the operators $\LL$, $\LLd$ possess a discrete set of  eigenfunctions
	\begin{equation}\label{eq:spectral-eigenfun}
		\LL[P_\lambda]=\lambda P_\lambda,\qquad\LLd[Q^*_{\lambda}]=\lambda Q^*_{\lambda},
	\end{equation}
    which satisfy in turn the following orthogonality relationship
    \begin{equation}\label{eq:spectral-decomposition}
        \langle Q_{\lambda'}\given P_{\lambda}\rangle= \int d\mbx\, Q^*_{\lambda'}(\mbx)P_\lambda(\mbx)  =  \delta_{\lambda'\lambda}.
    \end{equation}
    Moreover, we assume the forward equation \eqref{eq:forward-eq} satisfies a unique steady-state probability distribution $P_0(\mbx)$. 
    In place of the basin of attraction $\Omega$ considered in the deterministic limit-cycle context, here we take the domain of our analysis to be the support of $P_0$, that is $\mathcal{D}=\text{supp}(P_0)\equiv\{\mbx\:\mid\:P_0(\mbx)>0\}$.  
    
    Under these conditions, the transition probability can be expressed as (see \cite{Gar85}) 
	\begin{equation}\label{eq:condDensity}
		P(\mbx,t|\mbx_0,s) = P_0(\mbx) + \sum_{\lambda\not=0} e^{\lambda(t-s)} P_\lambda(\mbx) Q^*_\lambda(\mbx_0),
	\end{equation}
    for $t>s$.
    That is, the transition probability $P$ can be regarded as a sum of modes, each of which decays at a rate given by the real part of its respective eigenvalue $\lambda$, leading in the long-time limit to the stationary distribution $P_0(\mbx)$.\\
    
    As pointed out by Thomas and Lindner in \cite{thomas2014asymptotic}, the decaying modes in \eqref{eq:condDensity} store important information about the stochastic oscillator provided the following set of conditions (which they coined as ``robustly oscillatory'') are met
    	\begin{definition}[Criteria for a robust stochastic oscillator]\label{def:rob-osc-crit}
	Assume the spectra of the operator $\LLd$ satisfies the following conditions
	\begin{itemize}
	    \item its nontrivial eigenvalue with least negative real part $\lambda_1 = \mu + i\omega$ is complex, and unique (occurs with algebraic multiplicity one).  
	    \item all other nontrivial eigenvalues $\lambda'$ be significantly more negative, that is, $\Re[\lambda'] < 2\mu$.
	    \item $|\omega/\mu| \gg 1$
	\end{itemize}
	
	\end{definition}
    
    The first ``robustly oscillatory" condition, requires that the nontrivial eigenvalue in \eqref{eq:spectral-decomposition} with least negative real part $\lambda_1 = \mu \pm i\omega$ is part of a complex-conjugate eigenvalue pair, each of which is unique (occurs with algebraic multiplicity one).  
	If this condition is met, then we can express its associated right (forward) and left (backward or adjoint) eigenfunctions in polar form as $P_{\lambda_1}(\textbf{x}) = v(\mbx)e^{-i\phi(\mbx)}$ and 
	$Q^*_{\lambda_1}(\textbf{x}_0) = u(\mbx_0)e^{i\psi(\mbx_0)}$, where $v(\mbx)\ge 0$ and $u(\mbx_0)\ge 0$ are real functions specifying the amplitude of the corresponding eigenfunction.\footnote{According to \eqref{eq:spectral-eigenfun}, the forward and backward eigenfunctions should be written as $P_{\lambda_1}$ and $Q^*_{\lambda_1}$, respectively. However, in what follows we will denote them as $P_{1}$ and $Q^*_{1}$ to ease notation.} 
	
	The second condition requires all other nontrivial eigenvalues $\lambda'$ to be significantly more negative, that is, $\Re[\lambda'] < 2\mu$.
		If this is the case, then, at sufficiently long times the sum in \eqref{eq:condDensity} can be written  as
		\begin{equation*}\label{eq:decayingModes}
			\frac{\rho(\textbf{x},t|\textbf{x}_0,s) - P_0(\textbf{x})}{2 v(\mbx)u(\mbx_0)} \approx e^{\mu(t-s)}\cos(\omega(t-s) + \psi(\mbx_0) - \phi(\mbx)).
		\end{equation*}
		This asymptotic form means that the density approaches its steady state as a damped focus, with an oscillation period  of $2\pi/\omega$, and a decaying amplitude with time constant $1/|\mu|$. 
	
	The third condition focuses on the ``quality factor" $|\omega/\mu|$ \cite{giner2017power}. 
 The condition $|\omega/\mu| \gg 1$ is required to ensure the oscillation completes sufficiently many rotations before the damping reduces its phase coherence beyond detectability.

    \subsection{The stochastic asymptotic phase and its connection with the MRT phase} 

    The robustly oscillatory criterion provides a set of conditions guaranteeing that the asymptotic behaviour of the density \eqref{eq:decayingModes} has a significant oscillating component as it decays towards the steady-state distribution. 
    Under these conditions, Thomas and Lindner showed in \cite{thomas2014asymptotic} that one can extract a meaningful phase function $\psi(\mbx)$ --which they called the ``stochastic asymptotic phase"-- by  taking the complex argument of the slowest complex decaying backwards mode $Q^*_{1}(\mbx)=u(\mbx)e^{ i\psi(\mbx)}$. That is,
	\begin{equation}\label{eq:tlPhase}
		\psi(\mbx) =  \arg(Q^*_{1}(\mbx)),
	\end{equation}
	provided $u(\mbx) \neq 0$. However, as we showed in \cite{perez2022quantitative}, 
	\begin{equation}\label{eq:tlExpVal}
		\LLd[\psi(\mbx)] = \omega - 2\sum_{i,j} \mathcal{G}_{ij}(\mbx) \partial_i \ln(u(\mbx)) \partial_j \psi(\mbx).
	\end{equation}
    Therefore, the stochastic asymptotic phase $\psi(\mbx)$ does not fulfill the condition stated in \eqref{eq:ldaggerEq}. Indeed, the function $\Theta(\mbx)$ in \eqref{eq:ldaggerEq} that we are after, is equivalent to finding a function whose mean--return-time across trajectories is constant. In other words, the level sets of $\Theta(\mbx)$ will behave \textit{in the mean} as the deterministic isochrons $\mathcal{I}_\theta$ in \eqref{eq:isochronsDef}.

    Schwabedal and Pikovsky proposed in \cite{schwabedal2013phase} to construct a function $\Theta(\mbx)$ with uniform mean--return-time properties by way of an iterative numerical procedure.  They introduced a system of Poincaré sections of the form $\{ \ell_{MRT}(\phi), \enskip 0 \leq \phi \leq 2\pi \}$ which foliate the basin of attraction of the limit cycle $\mathcal{R} \subset \mathbb{R}^2$.  
    The sections $\ell_\text{MRT}$ exhibit the mean--return-time property in the sense that $\forall \mbx \in \ell_\text{MRT}$ the mean return time from $\mbx$ back to $\ell_\text{MRT}$ (having first completed a full oscillation) is constant. 

    Following Schwabedal and Pikovsky's seminal work, Cao and colleagues showed that the MRT phase could be obtained efficiently as the solution of a boundary value problem  \cite{cao2020partial}. 
    The $\ell_{MRT}$ sections correspond to the level curves of a function $T(\mbx)$ that satisfies a partial differential equation
    \begin{equation}\label{eq:meanIsochrons}
		\LLd[T(\mbx)] = -1,
	\end{equation}
    with specific boundary conditions.
    Here $\LLd$ corresponds, again, to the ubiquitous Kolmogorov backwards operator in \eqref{eq:itoRule}.
	
	Upon imposing a boundary condition reflecting a jump by $\tbar$ (the mean return period of the oscillator)\footnote{The  sufficient conditions to guarantee a well-defined mean period $\tbar$ are discussed in \cite{cao2020partial}.} across an arbitrary section transverse to the oscillation, the \textit{unique} solution of \eqref{eq:meanIsochrons}, up to an additive constant $T_0$, is a version of the MRT function,
	\begin{equation}
		\Theta(\mbx)=(2\pi/ \tbar)(T_0-T(\mbx)).    
	\end{equation}
	Hence, as shown in \cite{cao2020partial}, the MRT phase $\Theta(\mbx)$ satisfies the following equation
	\begin{equation}\label{eq:MRTphase}
		\mathcal{L}^\dagger [\Theta(\mbx)] = \frac{2 \pi}{\tbar},
	\end{equation}  
    which does match the condition \eqref{eq:ldaggerEq}. 
    We refer the reader to  \cite{cao2020partial, perez2022quantitative} for further details on computing $\tbar$, and the relationship between $\psi(\mbx)$ and $\Theta(\mbx)$.

    \subsection{The stochastic amplitude}


    In order to find the stochastic analogue of the isostable function, it is important to point out that, unlike the MRT function, the isostable function corresponds with an eigenfunction of $\LLd$ (see \eqref{eq:ldaggerEq}). 
    Under the assumptions of the ``robustly oscillator" criterion, we showed in \cite{perez2021isostables} that the stochastic isostable function $\BSigma(\mbx)$ is given by the slowest decaying eigenmode describing pure contraction without an associated oscillation. 
    That is,  the stochastic isostable function $\BSigma(\mbx)$ shall correspond with the eigenfunction of $\LLd$ associated to the real, \textit{least negative} eigenvalue, which we denote as $\lambda_\text{Floq} <0$. 
    
    Therefore, since $\mathcal{L}^\dagger [\BSigma(\mbx)] = \lambda_\text{Floq} \BSigma(\mbx)$, following the It\^{o} rule (Eq.~\eqref{eq:itoRule}), it is straightforward to show that  the stochastic isostable function follows
    \begin{equation}
    \dot{\BSigma}(\mbX)=~  \lambda_{\text{Floq}}\BSigma(\mbX) + \sum_{j=1}^2\sum_{k=1}^N \mbg_{jk}(\mbX) \frac{\partial\BSigma(\mbX)}{\partial \mbx_j}\,\xi_{k}(t).
    \end{equation}
    That is, transforming the dynamics $\mbX(t)$ in \eqref{eq:SDE} to $\BSigma(\mbX(t))$ yields a one-dimensional noise process (with multiplicative noise), consistent with the idea of finding a change of variables giving the ``simplest possible" dynamical description.
    Consequently, when transformed to the amplitude variables, the trajectories fluctuate around the set
    \begin{equation}
        \Sigma_0 = \{\mbx \mid \BSigma(\mbx)=0\}.
    \end{equation}
    Thus the zero level set of the slowest decaying real mode of the backward Kolmogorov operator is a natural candidate for the term ``stochastic limit cycle".  

\section{New insights into the stochastic dynamics provided by the transformation to stochastic phase-amplitude dynamics}\label{sec:section5} 
So far, we have reviewed previous results to assemble the following phase-amplitude description of the Langevin equation in \eqref{eq:SDE}
\begin{equation}
\begin{aligned}
\label{eq:stocPhaseAmp}
			\dot{\Theta}(\mbX)=&~  \frac{2 \pi}{\tbar} +\sum_{j=1}^2\sum_{k=1}^N \mbg_{jk}(\mbX) \frac{\partial\Theta (\mbX)}{\partial \mbx_j}\,\xi_{k}(t), \\
			\dot{\BSigma}(\mbX)=&~  \lambda_{\text{Floq}}\BSigma(\mbX) + \sum_{j=1}^2\sum_{k=1}^N \mbg_{jk}(\mbX) \frac{\partial\BSigma(\mbX)}{\partial \mbx_j}\,\xi_{k}(t).
\end{aligned}
\end{equation}
While the MRT phase has been considered previously \cite{schwabedal2013phase,cao2020partial} and also the stochastic isostable function \cite{perez2021isostables}, these functions have not previously been combined to give a complete change of coordinates in the spirit of the Parameterization Method \cite{guillamon2009computational, perez2020global}. 

In the remaining pages we detail our new results for planar stochastic oscillators.  
We prove results on the effective vector field, and we derive the second order moments (and therefore the variances) of $\Theta(\mbx)$ and $\BSigma(\mbx)$.


\subsection{The effective vector field}

As we discussed in Section~\S\ref{sec:section2}, for deterministic systems $\dot{\mbx} = \mbf(\mbx)$, with an underlying LC, there is a general procedure to find its associated phase and amplitude functions.\footnote{See \cite{perez2020global,mauroy2018global} for PM and Koopman-theoretic ways of finding such deterministic phase-amplitude functions, respectively.} 
By contrast, the previously introduced phase-amplitude functions $\Theta(\mbx)$ and $\BSigma(\mbx)$ do not follow from a deterministic vector field but from the backwards operator $\LLd$. 
Therefore, it seems natural to seek the deterministic vector field having these specific phase and amplitude functions. 
This question motivates our next result, in which we prove the existence and uniqueness of such a vector field (which we denote  the \textit{effective vector field}, see Remark~\ref{rm:eff-vec-field}).

\begin{theorem}\label{thm:bigthm}
    Assume there exist $\Theta(\mbx)$ and $\BSigma(\mbx)$, which are $C^1$ on an open subset $\mathcal{D}'\subset\mathcal{D}=\text{supp}(P_0)$, satisfying
        \begin{equation}\label{eq:ldaggerEq-in-theorem}
	    	\mathcal{L}^\dagger [\Theta(\mbx)] = \frac{2 \pi}{\tbar}, \qquad \quad \mathcal{L}^\dagger [\BSigma(\mbx)] = \lambda_\text{Floq} \BSigma(\mbx),
	\end{equation}
 where $\tbar$ is the mean oscillator period.
 Moreover, let the gradients $\nabla\Theta$ and $\nabla\BSigma$ be linearly independent on $\mathcal{D}'$.
    Then, there exists a vector field $\mbF$ on $\mathcal{D}'$, with associated flow $\varphi_t(\mbx)$, such that
    \begin{equation*}
       \E[\Theta(\mbX(t))] = \Theta(\varphi_t(\mbx_0)), \qquad \E[\BSigma(\mbX(t))] = \BSigma(\varphi_t(\mbx_0)),
    \end{equation*}
    for any $\mbx_0 = \mbX(0)\in\mathcal{D}'$.    
    Moreover, the vector field $\mbF$ is unique.
\end{theorem}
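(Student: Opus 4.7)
The plan is to use Dynkin's formula to translate the two backward-operator identities \eqref{eq:ldaggerEq-in-theorem} into explicit formulas for the averaged variables, and then to read $\mbF$ off by demanding a deterministic flow reproduce them. Because $\LLd[\Theta]\equiv 2\pi/\tbar$ is constant, the integral form of Dynkin's formula (quoted earlier in the footnote following \eqref{eq:dynkin-inf}) gives, for any $\mbx_0\in\mathcal{D}'$, $\E^{\mbx_0}[\Theta(\mbX(t))]=\Theta(\mbx_0)+(2\pi/\tbar)\,t$. Applying Dynkin to $\BSigma$ instead yields $\tfrac{d}{dt}\E^{\mbx_0}[\BSigma(\mbX(t))]=\lambda_\text{Floq}\,\E^{\mbx_0}[\BSigma(\mbX(t))]$, which integrates to $\E^{\mbx_0}[\BSigma(\mbX(t))]=\BSigma(\mbx_0)\,e^{\lambda_\text{Floq}t}$. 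These are the two deterministic targets that the flow $\varphi_t$ of the sought $\mbF$ must realise.

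Next I would define $\mbF$ pointwise by differentiating the target identities $\Theta(\varphi_t(\mbx_0))=\Theta(\mbx_0)+(2\pi/\tbar)t$ and $\BSigma(\varphi_t(\mbx_0))=\BSigma(\mbx_0)e^{\lambda_\text{Floq}t}$ at $t=0$. The chain rule converts these into, at each $\mbx\in\mathcal{D}'$, the $2\times 2$ linear system
\begin{equation*}
J(\mbx)\,\mbF(\mbx)=\begin{pmatrix}2\pi/\tbar\\[2pt]\lambda_\text{Floq}\BSigma(\mbx)\end{pmatrix},\qquad J(\mbx):=\begin{pmatrix}\nabla\Theta(\mbx)^\intercal\\[2pt]\nabla\BSigma(\mbx)^\intercal\end{pmatrix}.
\end{equation*}
By the linear-independence hypothesis, $J(\mbx)$ is invertible on $\mathcal{D}'$, so $\mbF(\mbx):=J(\mbx)^{-1}\bigl(2\pi/\tbar,\,\lambda_\text{Floq}\BSigma(\mbx)\bigr)^\intercal$ defines a continuous vector field unambiguously. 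Uniqueness is then essentially automatic: any competing $\tilde\mbF$ whose flow realises the two target identities must, by the same $t=0$ differentiation, satisfy the identical pointwise system, forcing $\tilde\mbF\equiv\mbF$.

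For existence, I would then verify that the flow of this $\mbF$ really does realise the targets. By construction, the chain rule gives $\tfrac{d}{dt}\Theta(\varphi_t(\mbx_0))=\mbF(\varphi_t)\cdot\nabla\Theta(\varphi_t)=2\pi/\tbar$ and $\tfrac{d}{dt}\BSigma(\varphi_t(\mbx_0))=\lambda_\text{Floq}\BSigma(\varphi_t(\mbx_0))$, whose integrals match the Dynkin expressions above. A local flow exists since $\mbF$ is continuous.

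The step I expect to require the most care is regularity and the domain of the flow. The $C^1$ hypothesis on $\Theta,\BSigma$ only yields a continuous $\mbF$, which ensures local existence but not uniqueness of $\varphi_t$; to pin down an unambiguous flow one either upgrades regularity (e.g.\ assuming $\Theta,\BSigma\in C^2$, making $\mbF$ locally Lipschitz) or exploits the fact that, by the linear-independence hypothesis, $(\Theta,\BSigma)$ is a local diffeomorphism on $\mathcal{D}'$ and hence can be used as a chart in which the transformed dynamics are manifestly $(\dot\vartheta,\dot\sigma)=(2\pi/\tbar,\,\lambda_\text{Floq}\sigma)$ with explicit, unique solutions whose pullback is $\varphi_t$. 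One also needs $\varphi_t(\mbx_0)$ to remain in $\mathcal{D}'$, so the conclusion is naturally understood on the maximal time interval up to the first exit from $\mathcal{D}'$.
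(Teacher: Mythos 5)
Your proposal follows essentially the same route as the paper: impose the target identities $\Theta(\varphi_t(\mbx_0))=\Theta(\mbx_0)+2\pi t/\tbar$ and $\BSigma(\varphi_t(\mbx_0))=\BSigma(\mbx_0)e^{\lambda_\text{Floq}t}$, differentiate along the flow, and solve the resulting $2\times 2$ linear system by inverting the gradient matrix, with linear independence giving both solvability and uniqueness of $\mbF$. Your additional touches---the explicit Dynkin computation of the means (which the paper established just before the theorem), the verification that the flow of the constructed $\mbF$ reproduces the targets, and the caveat that $C^1$ data yield only a continuous $\mbF$ so one should upgrade regularity or use $(\Theta,\BSigma)$ as a chart to get a well-defined flow on $\mathcal{D}'$---are careful refinements of the same argument rather than a different approach.
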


\begin{proof}
  	To prove this result we exploit the fact that the mean dynamics for $\Theta(\mbx)$ and $\BSigma(\mbx)$ are \textit{deterministic}. Therefore, we can ask for the vector field 
	\begin{equation}
		\dot{\mbx} = \mbF(\mbx)
	\end{equation}
	whose flow we denote as $\varphi_t(\mbx_0)$, generating such mean phase-amplitude dynamics,
 in the sense that $\varphi_0(\mbx_0)=\mbx_0$ and $d\varphi_t(\mbx_0)/dt=\mbF(\varphi_t(\mbx_0))$. 
    That is, we seek the vector field for which $\varphi_t$ satisfies
	\begin{equation}\label{eq:stocFlow}
		\Theta(\varphi_t(\mbx_0)) = \Theta(\mbx_0) + \frac{2 \pi t}{\tbar}, \qquad \BSigma(\varphi_t(\mbx_0)) = \BSigma(\mbx_0)e^{\lambda_\text{Floq}t}.
	\end{equation}
	
	Taking time derivatives in \eqref{eq:stocFlow}, and applying the chain rule, we see that $\mbF$ must satisfy
	\begin{equation}
		\label{eq:effective-vector-field-condition}
		\nabla \Theta(\textbf{x}) \cdot \mbF(\textbf{x}) = \frac{2 \pi}{\tbar}, \qquad   \nabla \BSigma(\textbf{x}) \cdot \mbF(\textbf{x}) = \lambda_\text{Floq} \BSigma(\textbf{x}).
	\end{equation}
    Because the gradients of $\BSigma(\textbf{x})$ and $\Theta(\textbf{x})$ are linearly independent at each point $\mbx$ in the domain $\mathcal{D}'$, by assumption,
	we obtain $\mbF(\textbf{x})$ by  matrix inversion:
    \begin{equation}
        \mbF(\mbx) = 
        \left(\begin{array}{c}
        \nabla \Theta(\textbf{x})^\intercal\\
        \nabla \BSigma(\textbf{x})^\intercal
        \end{array}\right)^{-1}
        \left(\begin{array}{c} 2 \pi/\tbar \\ \lambda_\text{Floq} \BSigma(\textbf{x})\end{array}\right).
    \end{equation}
    Linear independence guarantees that the matrix is invertible and the solution is unique.
\end{proof}

\begin{remark}\label{rm:eff-vec-field}
    As the numerical examples in Section \S\ref{sec:section6} will show, as long as the conditions of the ``robustly oscillatory criteria'' in \ref{def:rob-osc-crit} are met,  the deterministic
vector field $\mbF$ in \eqref{eq:effective-vector-field-condition} will generate, by construction, a flow with a stable LC given by the set $\Sigma_0 = \{\mbx \mid \BSigma(\mbx)=0\}$ whose nontrivial Floquet exponent will be given by $\lambda = \lambda_\text{Floq}$. 
Moreover, this limit cycle will have period $\tbar$ and its isochrons will correspond with the level curves of $\Theta(\mbx)$. For these reason we denote $\mbF$ as the \textit{effective} vector field because it captures  how the noise \textit{effectively} modifies the underlying vector field $\mbf$ of the SDE in \eqref{eq:SDE} to sustain oscillations. 
We emphasize that this construction remains valid even if the deterministic system
 $\dot{\mbx} = \mbf(\mbx)$ does not have oscillatory dynamics.
\end{remark}


\subsection{Calculation of variances}\label{sec:variances} 
In contrast to limit cycle systems in ordinary differential equations, the stochastic phase and amplitude variables in \eqref{eq:stocPhaseAmp} do fluctuate. 
So far, we have been able to find the equations giving the evolution of the first moment (the mean) of the phase-amplitude functions $\Theta(\mbx)$ and $\BSigma(\mbx)$ (see \eqref{eq:eq:stocPhaseAmpVF}).
However, the more we know about the evolution of higher moments, the better we will understand the behavior of the random variables of interest.
Therefore, in this Section we derive expressions for the variances of the proposed phase-amplitude variables. To start, consider the square of an arbitrary $C^2$ function  $\Upsilon(\mbx)$, and let us compute 
\begin{equation}\label{eq:lDagger-squared}
\begin{aligned}
    \LLd[\Upsilon^2(\mbx)] &= \mbf(\mbx) \cdot \nabla_{\mbx}\!\left(\Upsilon^2(\mbx) \right)+\sum_{i,j}\mathcal{G}_{ij}(\mbx)\frac{\partial^2 \Upsilon^2(\mbx)}{\partial x_{i} x_{j}}\\
    &= 2 \Upsilon(\mbx) \Big(\mbf(\mbx) \cdot \nabla_{\mbx}\!\left(\Upsilon(\mbx) \right)\!\Big)+2\sum_{i,j}\mathcal{G}_{ij}(\mbx)\frac{\partial }{\partial x_{i}}\left(\Upsilon(\mbx) \frac{\partial \Upsilon(\mbx) }{\partial x_{j}} \right)\\
    &= 2 \Upsilon(\mbx) \Big(\mbf(\mbx) \cdot \nabla_{\mbx}\!\left(\Upsilon(\mbx) \right)\!\Big)+2\sum_{i,j}\mathcal{G}_{ij}(\mbx)\left(\frac{\partial \Upsilon(\mbx)}{\partial x_{i}}\frac{\partial \Upsilon(\mbx) }{\partial x_{j}} + \Upsilon(\mbx)\frac{\partial^2 \Upsilon(\mbx)}{\partial x_{i} x_{j}} \right)\\
    &= 2\Upsilon(\mbx)\LLd[\Upsilon(\mbx)] +  2\sum_{i,j}\mathcal{G}_{ij}(\mbx)\left(\frac{\partial \Upsilon(\mbx)}{\partial x_{i}}\frac{\partial \Upsilon(\mbx) }{\partial x_{j}} \right).
\end{aligned}
\end{equation}

Therefore, transforming the SDE in \eqref{eq:SDE} to $\Upsilon^2(\mbX)$, using the It\^{o} rule and $\LLd[\Upsilon^2(\mbx)]$ in \eqref{eq:lDagger-squared} we find   
\begin{equation}\label{eq:SDE_VAR}
		\frac{d}{dt}\Upsilon^2(\mbX) = 2\Upsilon(\mbX)\LLd[\Upsilon(\mbX)] +  2\sum_{i,j}\mathcal{G}_{ij}(\mbX)\left(\frac{\partial \Upsilon(\mbX)}{\partial x_{i}}\frac{\partial \Upsilon(\mbX) }{\partial x_{j}} \right) + \mbg(\mbX)\nabla \Upsilon^2(\mbX)\xi(t),
\end{equation}
which we can average to obtain an equation for the time evolution of the second moment of $\mathbf{\Upsilon}$.
Noting the independence of $\xi(t)$ and $\mbg(\mbX(t))\nabla \Upsilon^2(\mbX(t))$,  we obtain\footnote{For economy of notation, in  \S\ref{sec:variances} we use $\lr{\cdot}_{\mbx_0}$ instead of $\E^{\mbx_0}[\cdot]$ to denote the expectation with respect to the law of the process with initial condition $\mbx_0$.}:
\begin{equation}\label{eq:square-avg}
    \frac{d}{dt}\lr{\Upsilon^2(\mbX)}_{\mbx_0} = 2 \lr{\Upsilon(\mbX)\LLd[\Upsilon(\mbX)]}_{\mbx_0} +  2\sum_{i,j} \lr{ \mathcal{G}_{ij}(\mbX)\left(\frac{\partial \Upsilon(\mbX)}{\partial x_{i}}\frac{\partial \Upsilon(\mbX) }{\partial x_{j}} \right) }_{\mbx_0} .  
\end{equation}

\subsubsection{The MRT case}\label{sec:mrt-variance}

We now wish to use the previous calculations to compute the variance of the unwrapped phase $\Theta(\mbx)$, cf.~Remark \ref{rm:unwrapped-phase}. 
To that aim, we consider the preceding equation \eqref{eq:square-avg}  substituting $\Upsilon^2(\mbX)$ by  $\Theta^2(\mbX)$:
\begin{equation}\label{eq:phase-var-expectation}
\begin{aligned}
    \frac{d}{dt}\!\lr{\Theta^2(\mbX)}_{\mbx_0} \!&= 2 \lr{\Theta(\mbX)\LLd[\Theta(\mbX)]}_{\mbx_0}  +  2\sum_{i,j} \lr{ \mathcal{G}_{ij}(\mbX)\left(\frac{\partial \Theta(\mbX)}{\partial x_{i}}\frac{\partial \Theta(\mbX) }{\partial x_{j}} \right) }_{\mbx_0}. 
\end{aligned}
\end{equation}

\begin{remark}\label{rm:phase-cte-div} 
    A well-defined phase function for a $d$-dimensional dynamical system provides a mapping from $\mathbb{R}^d$ to the circle.
    From topological considerations \cite{winfree1980geometry}, we  expect the domain of this mapping to have at least one ``hole" at which the phase is not defined. 
    At such points, which we collect in the \emph{phaseless set} $\mathcal{U}$ \cite{winfree1974patterns,guckenheimer1975}, the phase function is not $C^2$ (and may not be defined at all).
    (In Theorem \ref{thm:bigthm} we exclude such points, since $\mathcal{U}\cap\mathcal{D}'=\emptyset$.)
    Similarly, as one approaches $\mathcal{U}$, we expect the derivatives $\partial_{\mbx}\Theta(\mbx)$ to diverge. 
    Hence, computing the expectation in \eqref{eq:phase-var-expectation}
    requires careful treatment of the phaseless points $\mbx \in \mathcal{U}$. 
To this end, for $\epsilon>0$ we define $\mathcal{U}_\epsilon=\{\mbx\in\mathcal{D}'\given \rho(\mbx,\mathcal{U})<\epsilon\},$
where $\rho(\mbx,\mby)$ is the Euclidean distance on $\mathbb{R}^d$.
That is, $\mathcal{U}_\epsilon=\mathcal{U}+\mathcal{B}_\epsilon(0)$, the open set that extends out from $\mathcal{U}$ by $\epsilon$ in every direction.  
The diffusion constant $D^\Theta_{\text{eff}}$ thus may depend on $\epsilon$. 
Whether this dependence is weak or strong we examine case-by-case below.  
In  Section $\S\ref{sec:section6}$ we compute the diffusion constant for three different examples. For each of them, we discuss the proper numerical treatment leading to a meaningful computation of formula \eqref{eq:thetaVar} and hence of a proper estimation of the diffusion constant  $D^\Theta_{\text{eff}}$. 
\end{remark}

Taking into account the previous Remark~\ref{rm:phase-cte-div}, we compute the expectation in \eqref{eq:phase-var-expectation} as follows.
\begin{equation*} \small
\begin{aligned}  
    \frac{d}{dt}\!\lr{\Theta^2(\mbX)}_{\mbx_0} \!    &= 2 \lr{\Theta(\mbX)\left( \frac{2\pi}{\tbar} \right)}_{\mbx_0}  +  2\sum_{i,j} \int_{\mbx \notin \mathcal{U}_\epsilon} d\mbx ~P(\mbx, t| \mbx_0, 0)  ~\mathcal{G}_{ij}(\mbx)\left(\frac{\partial \Theta(\mbx)}{\partial x_{i}}\frac{\partial \Theta(\mbx) }{\partial x_{j}} \right)  \\
    &= \frac{4\pi}{\tbar} \lr{\Theta(\mbX)}_{\mbx_0} +  2\sum_{i,j} \!\int_{\mbx \notin \mathcal{U}_\epsilon}\! d\mbx \left(\! P_0(\mbx) + \sum_{\lambda\not=0} Q^*_\lambda(\mbx_0)P_\lambda(\mbx) e^{\lambda t}\!\right)\! \mathcal{G}_{ij}(\mbx)\! \left(\frac{\partial \Theta(\mbx)}{\partial x_{i}}\frac{\partial \Theta(\mbx) }{\partial x_{j}} \right).
\end{aligned}
\normalsize
\end{equation*}\normalsize
Integrating this expression gives
\begin{equation*}
\begin{aligned}
\lr{\Theta^2(\mbX)}_{\mbx_0} - \Theta^2_0    &= \frac{4 \pi}{\tbar} \int^t_0  \! \left(\frac{2 \pi t'}{\tbar} + \Theta_0 \right)dt' + 2\sum_{i,j} \int_{\mbx \notin \mathcal{U}_\epsilon}  \!\! P_0(\mbx)  \mathcal{G}_{ij}(\mbx)\! \left(\frac{\partial \Theta(\mbx)}{\partial x_{i}}\frac{\partial \Theta(\mbx) }{\partial x_{j}} \right) d\mbx \! \int^t_0 \!dt' \\ &+ 2\sum_{\lambda\not=0} Q^*_\lambda(\mbx_0) \sum_{i,j} \int_{\mbx \notin \mathcal{U}_\epsilon}   P_\lambda(\mbx)  \mathcal{G}_{ij}(\mbx)\left(\frac{\partial \Theta(\mbx)}{\partial x_{i}}\frac{\partial \Theta(\mbx) }{\partial x_{j}} \right) d\mbx \int^t_0 e^{\lambda t} dt'\\
    &= \frac{4 \pi}{\tbar} \left(\frac{\pi t^2}{\tbar} + \Theta_0t \right) + 2 t \beta_{0,\Theta} + 2\sum_{\lambda\not=0} Q^*_\lambda(\mbx_0) \frac{\beta_{\lambda,\Theta}}{\lambda} \left(e^{\lambda t} - 1 \right).
\end{aligned}
\end{equation*}
Here we have introduced the coefficients
\begin{equation}\label{eq:beta-theta-coef}
 \beta_{\lambda,\Theta}=\sum_{i,j}\int_{\mbx \notin \mathcal{U}_\epsilon}   P_\lambda(\mbx)  \mathcal{G}_{ij}(\mbx)\left(\frac{\partial \Theta(\mbx)}{\partial x_{i}}\frac{\partial \Theta(\mbx) }{\partial x_{j}} \right) d\mbx, 
\end{equation}
that can be obtained once we know the phase mapping $\Theta(\mbx)$, the stationary density $P_0(\mbx)$, and the rest of the forward eigenfunctions $P_\lambda(\mbx)$.
\\ \\
With the above results, we obtain the variance of $\Theta(\mbX)$
\begin{equation}\label{eq:thetaVar}
\begin{aligned}
    \text{Var}(\Theta(\mbX)) &= \lr{\Theta^2(\mbX)}_{\mbx_0} - \lr{\Theta(\mbX)}^2_{\mbx_0},\\
    &= 2t \beta_{0,\Theta} + 2\sum_{\lambda\neq0} Q^*_\lambda(\mbx_0) \frac{\beta_{\lambda,\Theta}}{\lambda} \left(e^{\lambda t} - 1 \right).
\end{aligned}
\end{equation}

From this result we can obtain the diffusion constant of the MRT phase, capturing the asymptotic rate of spread of the unwrapped phase around its mean value. 
For a given phase function $\phi$ its respective diffusion constant $D^\phi_{\text{eff}}$ is defined as
\begin{equation}
D^\phi_{\text{eff}} = \lim_{t \to \infty} \frac{1}{2t} ~\text{Var}(\phi(t)).
\end{equation}
Therefore for the MRT we find
\begin{equation}\label{eq:phase-diff-cte}
D^\Theta_{\text{eff}} = \lim_{t \to \infty} \frac{1}{2t} ~\text{Var}(\Theta(t)) = \beta_{0,\Theta} = \sum_{i,j} \int_{\mbx \notin \mathcal{U}_\epsilon}   P_0(\mbx)  \mathcal{G}_{ij}(\mbx)\left(\frac{\partial \Theta(\mbx)}{\partial x_{i}}\frac{\partial \Theta(\mbx) }{\partial x_{j}} \right) d\mbx.
\end{equation}
%


\begin{remark}
Given a stochastic system as in \eqref{eq:SDE}, its phase diffusion coefficient is an asymptotic property of the system that should not depend on the specific choice of the phase observable (as long as one considers a proper phase definition). Therefore, our formula \eqref{eq:phase-diff-cte}, which just requires the knowledge of the phase function $\Theta(\mbx)$ and the stationary density $P_0(\mbx)$, provides an exact expression for this asymptotic property for any general phase definition.
\end{remark}

\subsubsection{The Amplitude case}\label{sec:amp-variance}

We can also use the preceding results to compute the variance of the stochastic amplitude $\BSigma(\mbx)$.
Unlike the phase function $\Theta$, the amplitude function is $C^2$ throughout the domain $\mathcal{D}'$ in every example we have encountered.  
Considering \eqref{eq:square-avg} and substituting $\Upsilon^2(\mbX)$ by  $\BSigma^2(\mbX)$ we obtain
\begin{equation*}\small
\begin{aligned}
    \frac{d}{dt}\!\lr{\BSigma^2(\mbX)}_{\mbx_0} \! &= 2 \lr{\BSigma(\mbX)\LLd[\BSigma(\mbX)]}_{\mbx_0} +  2\sum_{i,j} \lr{ \mathcal{G}_{ij}(\mbX)\left(\frac{\partial \BSigma(\mbX)}{\partial x_{i}}\frac{\partial \BSigma(\mbX) }{\partial x_{j}} \right) }_{\mbx_0} \\
    &= 2 \lambda_{\text{Floq}} \lr{\BSigma^2(\mbX)}_{\mbx_0} +  2\sum_{i,j} \int d\mbx ~P(\mbx, t| \mbx_0, 0) ~\mathcal{G}_{ij}(\mbx)\left(\frac{\partial \BSigma(\mbx)}{\partial x_{i}}\frac{\partial \BSigma(\mbx) }{\partial x_{j}} \right) \\
    &=  2 \lambda_{\text{Floq}} \lr{\BSigma^2(\mbX)}_{\mbx_0} \!+  2\sum_{i,j} \!\int\! d\mbx \!\left( \!P_0(\mbx) \!+\! \sum_{\lambda\neq0} Q^*_\lambda(\mbx_0)P_\lambda(\mbx) e^{\lambda t}\!\right) \!\mathcal{G}_{ij}(\mbx)\!\left(\frac{\partial \BSigma(\mbx)}{\partial x_{i}}\frac{\partial \BSigma(\mbx) }{\partial x_{j}} \right) \\
    &=  2 \left(\lambda_{\text{Floq}} \lr{\BSigma^2(\mbX)}_{\mbx_0} + \beta_{0,\BSigma} \right) +  2 \sum_{\lambda\neq0} Q^*_\lambda(\mbx_0) \beta_{\lambda,\BSigma} e^{\lambda t},
\end{aligned}
\end{equation*}\normalsize
where, similarly as in \eqref{eq:beta-theta-coef}, we have introduced the coefficients
\begin{equation}
  \beta_{\lambda,\BSigma}=  \!\int\! d\mbx  \sum_{i,j}  P_\lambda(\mbx)  \mathcal{G}_{ij}(\mbx) \frac{\partial \BSigma(\mbx)}{\partial x_{i}}\frac{\partial \BSigma(\mbx) }{\partial x_{j}}.
\end{equation}
Solving the above  nonhomogeneous linear equation for the variance (with time-dependent forcing) gives
%
\begin{equation}
    \lr{\BSigma^2(\mbX)}_{\mbx_0} =  - \frac{\beta_{0,\BSigma}}{\lambda_{\text{Floq}}} + \left(\BSigma^2_0 + \frac{\beta_{0,\BSigma}}{\lambda_{\text{Floq}}} \right)e^{2t\lambda_{\text{Floq}}} + 2  \sum_{\lambda\neq0} \frac{Q^*_\lambda(\mbx_0) \beta_{\lambda,\BSigma}}{\lambda -2\lambda_{\text{Floq}}} \left( e^{\lambda t} - e^{2\lambda_{\text{Floq}} t} \right). 
\end{equation}
Consequently, we obtain  the variance of $\BSigma$  as
\begin{equation}\label{eq:sigmaVar}
\begin{aligned}
    \text{Var}(\BSigma(\mbX)) &= \lr{\BSigma^2(\mbX)}_{\mbx_0} - \lr{\BSigma(\mbX)}^2_{\mbx_0}\\
    & = - \frac{\beta_{0,\BSigma}}{\lambda_{\text{Floq}}} \left(1 - e^{2t\lambda_{\text{Floq}}}\right) + 2  \sum_{\lambda\neq0} \frac{Q^*_\lambda(\mbx_0) \beta_{\lambda,\BSigma}}{\lambda -2\lambda_{\text{Floq}}} \left( e^{\lambda t} - e^{2\lambda_{\text{Floq}} t} \right).
\end{aligned}
\end{equation}
Thus $\text{Var}(\BSigma(\mbX))$ asymptotically approaches a constant value given by
\begin{equation}\label{eq:st-sigmaVar}
    \lim_{t \to \infty} \text{Var}(\BSigma(\mbX)) = - \frac{\beta_{0,\BSigma}}{\lambda_{\text{Floq}}} = \frac{-1}{\lambda_{\text{Floq}}} \left( \sum_{i,j} \int   P_0(\mbx)  \mathcal{G}_{ij}(\mbx)\left(\frac{\partial \BSigma(\mbx)}{\partial x_{i}}\frac{\partial \BSigma(\mbx) }{\partial x_{j}} \right) d\mbx \right).
\end{equation}
We see that the variance involves a ratio between the expected value of the square of the amplitude diffusive terms in \eqref{eq:stocPhaseAmp} and the amplitude contraction (represented by $|\lambda_{\text{Floq}}|$).\\

\section{Numerical Examples}\label{sec:section6}
	
	Next, we illustrate our method by applying it to systems with a noise-perturbed LC as well as to systems with truly noise-induced oscillations. For the numerical procedure underlying these results, we refer the reader to the Appendix.
	
	\subsection{A spiral sink}\label{ssec:oup}
	
	We start by considering a classical and well studied stochastic process: a two-dimensional Ornstein-Uhlenbeck process (OUP) such that the origin becomes a stable sink \cite{uhlenbeck1930theory, Gar85, leen2016eigenfunctions, thomas2019phase}. 
 The general Langevin equation is:
	\begin{equation}\label{eq:langEq}
		\dot{\mbx} = A\mbx + B\xi,
	\end{equation}
	where we assume that the two eigenvalues of $A$ are a complex conjugate pair denoted as $\lambda_\pm = \mu \pm i \omega$ with $\mu<0$. 
	For concreteness, we choose the matrices $A$ and $B$ as
	\begin{equation}\label{eq:abMatrices}
		A = \begin{pmatrix} \mu & -\omega \\ \omega & \mu \end{pmatrix}
		, \quad B = \begin{pmatrix} B_{11} & B_{12} \\ B_{21} & B_{22} \end{pmatrix}.
	\end{equation}
	Following this notation, we can write the matrix $\mathcal{G} = \frac{1}{2} BB^\top$ in $\LLd$ in the following way \cite{thomas2019phase}
	\begin{equation}
		\mathcal{G}
			= \frac{1}{2} \begin{pmatrix} B^2_{11} + B^2_{12} & B_{11}B_{21} + B_{12}B_{22} \\ B_{11}B_{21} + B_{12}B_{22} & B^2_{22} + B^2_{21} \end{pmatrix}
			= \epsilon \begin{pmatrix} 1 + \beta_D  & \beta_c \\ \beta_c & 1 - \beta_D \end{pmatrix}.
	\end{equation}
	
	To study the OUP, we choose $\mu = -0.1$ and $\omega= 0.5$ and isotropic noise $B=\sqrt{2D}\cdot[1,0;0,1]$ with 
	$D=0.00125$ from which we obtain $\epsilon, \beta_D,$ $\beta_c =  [D, 0, 0]$. For these parameters, the MRT period is given by $\tbar = \frac{2 \pi (\omega^2 + \mu^2(1 - \beta^2_c - \beta^2_D))}{\omega(\mu^2 + \omega^2)} = 4\pi$ (see \cite{perez2022quantitative}).
	
	The mean--return-time phase function of the OUP is given by $\Theta(\mbx) = \arctan(\mby/\mbx)$ (see Fig.~\ref{fig:focusPanel}a). 
 As  shown in \cite{perez2021isostables}, its amplitude function is given by $\Sigma(\mbx) = 2 + \mu(\mbx^2 + \mby^2)/\epsilon$, so there is an effective radius $\Sigma_0 = |2\epsilon/\mu|$ (see Fig.~\ref{fig:focusPanel}b) to which trajectories decay in the mean (see Fig.~\ref{fig:focusPanel}f). From these phase-amplitude functions we can easily obtain the effective vector field (see Fig.~\ref{fig:focusPanel}c)
    \begin{equation*}
        \dot{\mbx} = \mu\mbx - \omega\mby + 2\varepsilon\frac{\mbx}{\mbx^2 + \mby^2}, \qquad \dot{\mby} = \omega\mbx  + \mu\mby + 2\varepsilon\frac{\mby}{\mbx^2 + \mby^2},
    \end{equation*}
    showing how the expansive effects of the noise ($\epsilon > 0$), combine with the dissipative effect of the dynamics ($\mu < 0$) to effectively sustain oscillations.

       As is widely known, the probability density of the OUP is given by a Gaussian function with a maximum at the origin \cite{uhlenbeck1930theory, Gar85}. Therefore, since the MRT function $\Theta(\mbx)$ is known, it is possible to write the integral in \eqref{eq:phase-diff-cte} yielding the diffusion constant of the unwrapped MRT phase 
    \begin{equation}\label{eq:focus-diff-constant}  \small  D^\Theta_{\text{eff}} = \lim_{r_0 \to 0} \frac{1}{ C} \int^{\infty}_{r_0}  \frac{dr}{r}\exp\left[\mu \frac{r^2}{2\epsilon}\right], \quad\enskip \text{where} \enskip C =  2\pi \! \int^{\infty}_{r_0} \!  dr~r\exp\left[\mu \frac{r^2}{2\epsilon}\right] = \frac{2\pi\epsilon}{|\mu|}\exp\left[\mu \frac{r_0^2}{2\epsilon}\right].
    \end{equation}\normalsize
    where we have introduced $r = \sqrt{x^2 + y^2}$. 
    As expected (see Remark~\ref{rm:phase-cte-div}) the integral \eqref{eq:focus-diff-constant} yielding $D^\Theta_{\text{eff}}$ would diverge if taken over the entire domain $r\ge 0$.   
    For this reason, we have written an improper integral in which $r_0>0$ (note $\epsilon\equiv r_0$ in  Remark \ref{rm:phase-cte-div}) is a cut-off radius preventing the integral in \eqref{eq:focus-diff-constant}  diverging. 
    We note that considering this cut-off radius $r_0$, requires to recompute the normalisation constant $C$ of the stationary probability density accordingly. 
    To check the validity of our theoretical approach, we have computed $D^\Theta_{\text{eff}}$ in \eqref{eq:focus-diff-constant} for three different values of $r_0 = [10^{-2}, 10^{-3}, 10^{-4}]$. These theoretical values were compared with the phase diffusion constant obtained by integrating the OUP in \eqref{eq:langEq} with reflecting boundary conditions at the three corresponding values of $r_0$. As shown in Fig.~\ref{fig:focusPanel}g, we find a very good agreement with the predictions of the integral in \eqref{eq:focus-diff-constant} and the results of numerical simulations with reflecting boundary conditions at $r_0$. Regarding the variance, we observe that $\text{Var}(\BSigma(\mbx))$ approaches the expected value of $\beta_{0,\BSigma}$ in the asymptotic limit, $t \to \infty$, in agreement with the results of numerical simulations (see Fig.~\ref{fig:focusPanel}h).


	
	\subsection{A supercritical Hopf bifurcation}
    Next, we study a well known canonical dynamical system: the normal form of a supercritical Hopf bifurcation
    \begin{equation}
	   \begin{aligned}\label{eq:slEDOs}
				\dot{\mbX} &= \beta \mbX - \mbY - \mbX(\mbX^2 + \mbY^2)+ \sqrt{2D_x} \xi_x(t), \\
				\dot{\mbY} &= \mbX + \beta \mbY - \mbY(\mbX^2 + \mbY^2)+ \sqrt{2D_y} \xi_y(t),
			\end{aligned}
		\end{equation}	
		with $\beta = 1$. 
  In the absence of noise, this system shows a limit cycle of period $T = 2\pi$ and characteristic multiplier of $\lambda = -2$. 
  To illustrate  how the proposed framework captures the effects of noise, we will study this model for anisotropic noise ($D_x = 0.1$ and $D_y = 2.5 \cdot 10^{-4}$).

	As Fig.~\ref{fig:hopfPanel} shows, the main effect of anisotropic noise is to break the characteristic rotational symmetry of the deterministic Hopf system. This symmetry breaking is particularly remarkable for the isostable function (Fig.~\ref{fig:hopfPanel}b) and, consequently, it is also reflected in the effective vector field (Fig.~\ref{fig:hopfPanel}c). We observe that transforming the trajectory of the effective vector field to the phase and amplitude functions matches the expected value of the same phase-amplitude functions over trajectories of the SDE \eqref{eq:slEDOs} (Fig.~\ref{fig:hopfPanel} panels d, e, f). 
 Finally, we observe  excellent agreement between  the theoretical predictions for both $\text{Var}(\Theta(\mbx))$ and $\text{Var}(\BSigma(\mbx))$ in formulas \eqref{eq:thetaVar} and \eqref{eq:sigmaVar}, and numerical simulations (see Fig.~\ref{fig:hopfPanel} panels g,h). 
 Note that in contrast with the OUP example in \S\ref{ssec:oup}, for the Hopf normal form \eqref{eq:slEDOs} the stationary probability near the phaseless point at the origin is very small.
 The effect that trajectories passing near this point have on the effective diffusion constant for the unwrapped MRT phase may be neglected.

	%
	\subsection{A saddle node on an invariant cycle (SNIC) bifurcation}
    As a final example  illustrating the phase-amplitude framework we put forward in this manuscript we choose a normal form for a Saddle-Node on an Invariant Circle (SNIC) endowed with white Gaussian noise
    \begin{equation}
	   \begin{aligned}\label{eq:snicEDOs}
				\dot{\mbX} &= \beta \mbX - m\mbY - \mbX(\mbX^2 + \mbY^2) + \frac{\mbY^2}{\sqrt{\mbX^2 + \mbY^2}} + \sqrt{2D_x} \xi_x(t), \\
				\dot{\mbY} &= m\mbX + \beta \mbY - \mbY(\mbX^2 + \mbY^2) -  \frac{\mbX\mbY}{\sqrt{\mbX^2 + \mbY^2}} + \sqrt{2D_y} \xi_y(t).
			\end{aligned}
		\end{equation}	
		
        In the absence of noise, the saddle-node bifurcation from a pair of fixed points to a LC of radius $\sqrt{\beta}$ occurs at $m = 1$ (see \cite{guillamon2009computational} for more details). 
        Here we will consider $\beta=1, m=0.99 $ and $D_x=D_y=0.25$. 
        For this noise level, even if we set parameters for which there is not an underlying LC, the presence of noise leads to the appearance of noise induced oscillations. 
        We regard this regime as excitable \cite{lindner2004effects} since the mechanism sustaining oscillations involves the noise pushing trajectories past the stable manifold of the deterministic saddle point, from whence they perform a  large excursion, then rapidly return to the stable deterministic node.
        
        As we observe in Fig.~\ref{fig:snicPanel}a, the MRT phase function $\Theta(\mbx)$ presents the characteristic structure of a saddle-node bifurcation. 
        Near the ghost of the saddle-node point, the isochrons are more densely packed. The stochastic amplitude  function in Fig.~\ref{fig:snicPanel}b shows its zeroth level set $\Sigma_0$ close to the invariant curve (a circle of radius $\sqrt{\beta}$). 
        If we compare the deterministic vector field and the effective vector field (Fig.~\ref{fig:snicPanel}c), we see that, even if they are very similar, the effective vector field shows oscillations while the deterministic vector field does not. 
        Moreover, as ensured by  Theorem~\ref{thm:bigthm}, transforming  the trajectory of the effective vector field to the phase and amplitude functions agrees with the averaged value of the same phase-amplitude functions over trajectories of the SDE \eqref{eq:snicEDOs} (Fig.~\ref{fig:snicPanel} panels d, e, f). 
        We also observe  good agreement between the theoretical formulas in \eqref{eq:thetaVar} and \eqref{eq:sigmaVar} for both $\text{Var}(\Theta(\mbx))$ and $\text{Var}(\BSigma(\mbx))$ and the numerical simulations (see Fig.~\ref{fig:snicPanel} panels g,h).
        As in the case of the Stuart-Landau system, the phaseless point occurs in a region of low probability density, and its effect on the effective diffusion constant of the unwrapped MRT phase may be neglected.
	
\begin{figure}[H]
	\centering\includegraphics[width=1\textwidth]{./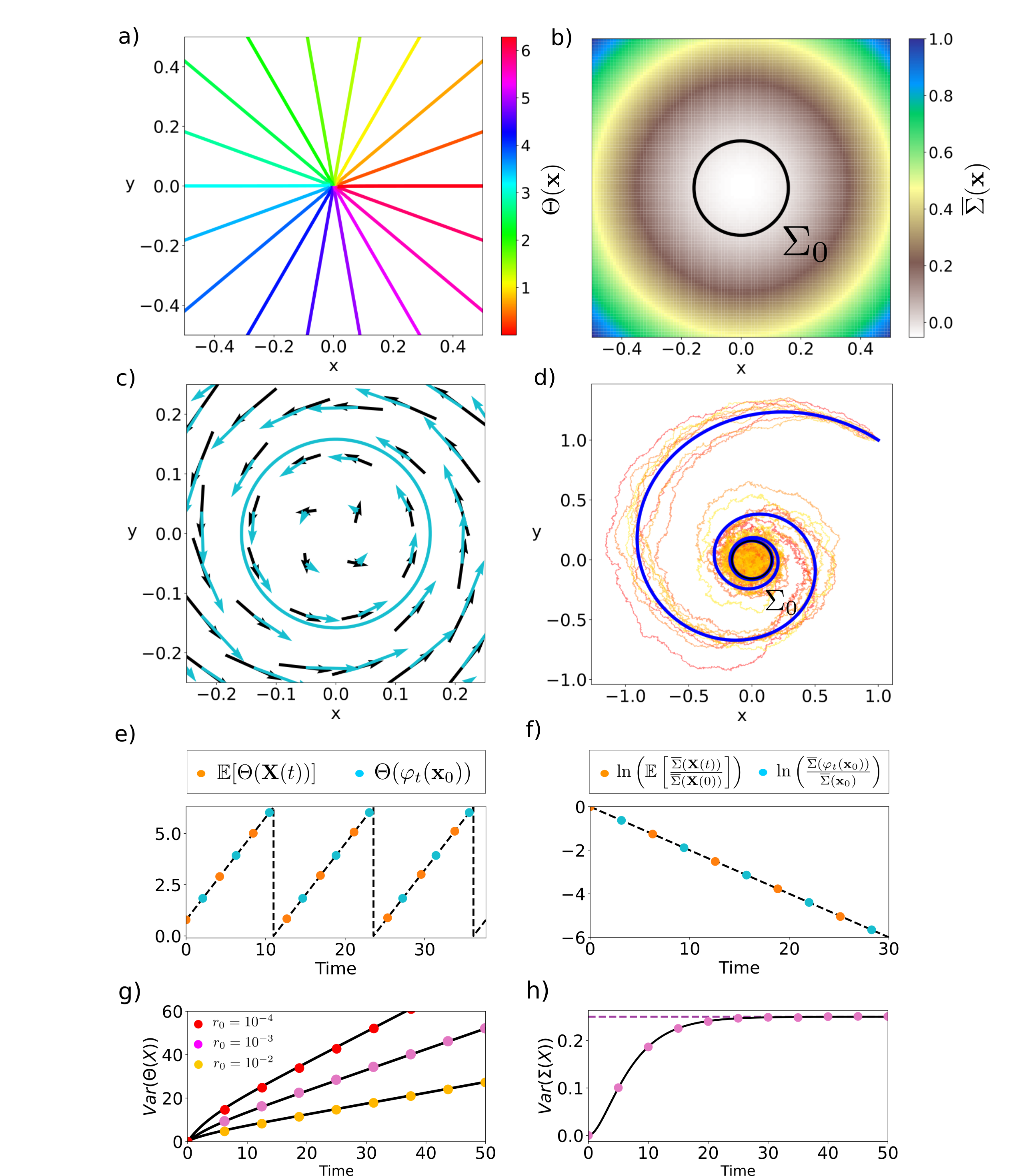}
	\caption{Noisy Spiral Sink in \eqref{eq:langEq} for $\mu = -0.1,~\omega = 0.5,~D = 0.00125$. For these parameters we find $\tbar = 4\pi$ and $\lambda_{\text{Floq}} = 2\mu = -0.2$. (a-b) Mean-Return-Phase $\Theta(\mbx)$ function and Isostable function $\BSigma(\mbx)$ ($\mby$-axes shared). The black curve in (b) corresponds to the set $\Sigma_0 = \{ \mbx ~|~ \BSigma=0 \}$. (c) Deterministic vector field $\mbf(\mbx)$ (black) and effective vector field $\mbF(\mbx)$ (blue). We also plot $\Sigma_0$ in blue. (d) Ten trajectories $\mbX(t)$ generated by the SDE \eqref{eq:langEq} (yellow to red colors) and the trajectory $\varphi_t(\mbx_0)$ corresponding to the effective vector field (blue). 
 Note the different scales in (c) and (d) compared to (a-b).
 (e) $\E[\Theta(\mbX(t))]_{\mbx_0}$ and $\Theta(\varphi_t(\mbx_0))$ versus time, showing the predicted linear growth (black dashed line). (f) $\ln(\E\left[\BSigma(\mbX(t))/\BSigma(\mbX(0))\right]_{\mbx_0})$ and $\ln\left(\BSigma(\varphi_t(\mbx_0))/\BSigma(\mbx_0)\right)$ versus time, showing the predicted exponential decay (black dashed line). (g-h) Variance of the Mean-Return-Phase for three different cut-off radius $r_0$ and the Isostable function, respectively. Black curve theory, dots computations. }\label{fig:focusPanel}
\end{figure}

\begin{figure}[H]
	\centering\includegraphics[width=1\textwidth]{./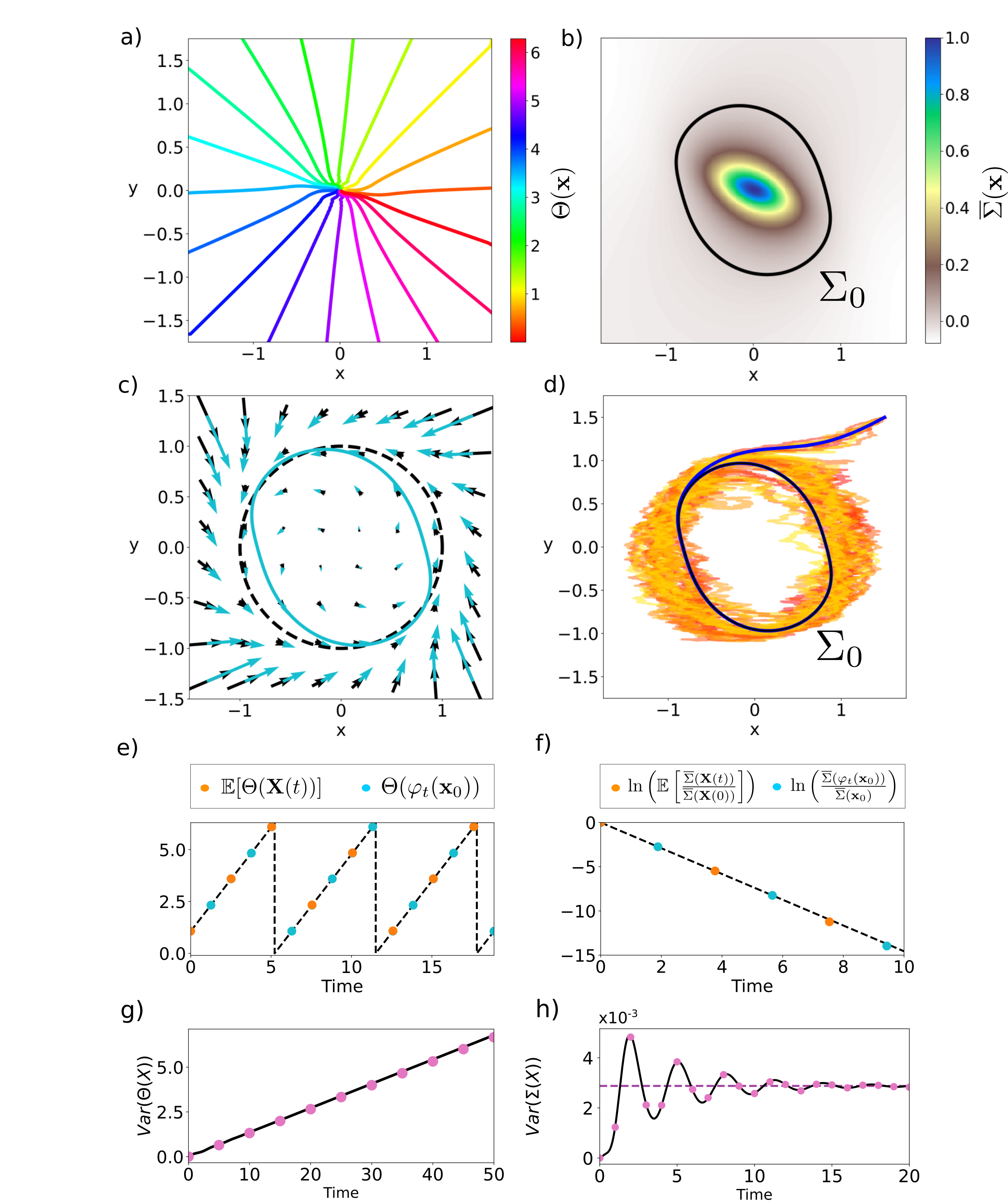}
	\caption{Noisy supercritical Hopf bifurcation in  \eqref{eq:slEDOs} for $\beta = 1$ and anisotropic noise $D_x = 0.1$ and $D_y = 2.5 \cdot 10^{-4}$. For these parameters we find $\tbar = 6.288$ and $\lambda_{\text{Floq}} = -1.456$. (a-b) Mean-Return-Phase $\Theta(\mbx)$ function and Isostable function $\BSigma(\mbx)$ ($\mby$-axes shared). The black curve in (b) corresponds to the set $\Sigma_0 = \{ \mbx ~|~ \BSigma=0 \}$. (c) Deterministic vector field $\mbf(\mbx)$ (black) and effective vector field $\mbF(\mbx)$ (blue). We also plot $\Sigma_0$ in blue. (d) Ten trajectories $\mbX(t)$ generated by the SDE \eqref{eq:slEDOs} (yellow to red colors) and the trajectory $\varphi_t(\mbx_0)$ corresponding to the effective vector field (blue). (e) $\E[\Theta(\mbX(t))]_{\mbx_0}$ and $\Theta(\varphi_t(\mbx_0))$ versus time, showing the predicted linear growth (black dashed line). (f) $\ln(\E\left[\BSigma(\mbX(t))/\BSigma(\mbX(0))\right]_{\mbx_0})$ and $\ln\left(\BSigma(\varphi_t(\mbx_0))/\BSigma(\mbx_0)\right)$ versus time, showing the predicted exponential decay (black dashed line). (g-h) Variance of the Mean-Return-Phase and the Isostable function, respectively. Black curve theory, purple dots computations.}\label{fig:hopfPanel}
\end{figure}

\begin{figure}[H]
	\centering\includegraphics[width=1\textwidth]{./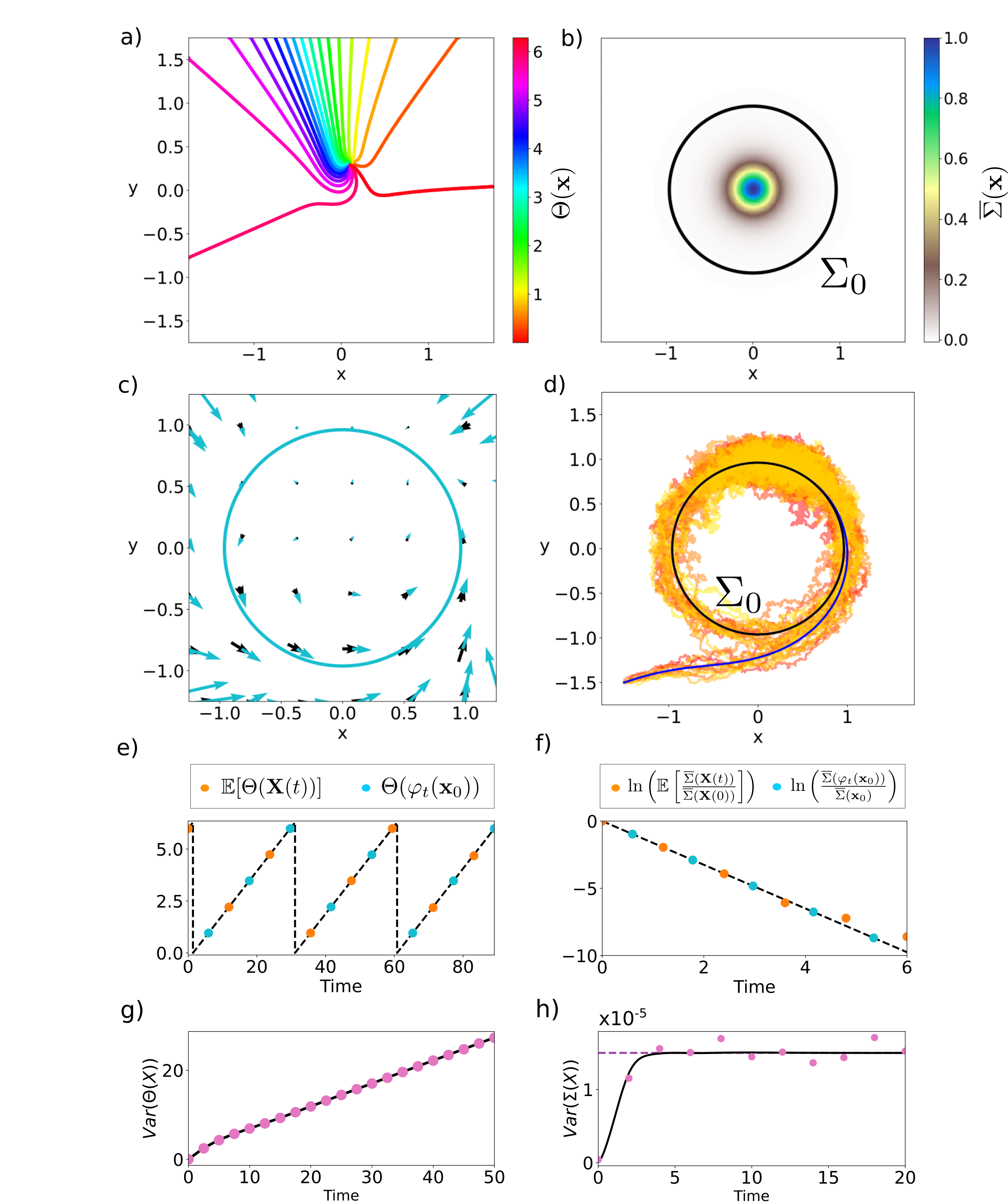}
	\caption{Noisy SNIC bifurcation in  \eqref{eq:snicEDOs} for $\beta = 1, m=0.99$ and $D_x, D_y = \sqrt{2D}\cdot[1,1]$ with $D=0.025$. For these parameters we find $\tbar = 29.696$ and $\lambda_{\text{Floq}} = -1.625$. (a-b) Mean-Return-Phase $\Theta(\mbx)$ function and Isostable function $\BSigma(\mbx)$ ($\mby$-axes shared). The black curve in (b) corresponds to the set $\Sigma_0 = \{ \mbx ~|~ \BSigma=0 \}$. (c) Deterministic vector field $\mbf(\mbx)$ (black) and effective vector field $\mbF(\mbx)$ (blue). We also plot $\Sigma_0$ in blue. (d) Ten trajectories $\mbX(t)$ generated by the SDE \eqref{eq:snicEDOs}(yellow to red colors) and the trajectory $\varphi_t(\mbx_0)$ corresponding to the effective vector field (blue). (e) $\E[\Theta(\mbX(t))]_{\mbx_0}$ and $\Theta(\varphi_t(\mbx_0))$ versus time, showing the predicted linear growth (black dashed line). (f) $\ln(\E\left[\BSigma(\mbX(t))/\BSigma(\mbX(0))\right]_{\mbx_0})$ and $\ln\left(\BSigma(\varphi_t(\mbx_0))/\BSigma(\mbx_0)\right)$ versus time, showing the predicted exponential decay (black dashed line). (g-h) Variance of the Mean-Return-Phase and the Isostable function, respectively. Black curve theory, purple dots computations.}\label{fig:snicPanel}
\end{figure}

\newpage
	
\section{Discussion}\label{sec:discussion} 
The classical Parameterization Method (PM) uses a change of coordinates to transform a dynamical system, for instance a limit cycle system, to a representation in which the transformed variables evolve as simply as possible, viz.~either at constant or linear rates.  
In this paper we have proposed an extension of the PM to stochastic oscillators.
Specifically, we have assembled previously proposed phase and amplitude functions for stochastic limit cycles into a unified change of coordinates that, in the mean, evolve as simply as possible.
In addition, we have established several novel results concerning this phase-amplitude description for noisy oscillators.
We have proven the existence and uniqueness of an effective vector field whose orbits capture fundamental properties of the stochastic limit cycle.  
And we have derived expressions for the variance of the phase and isostable functions over both short and long time intervals.  

Our approach is inspired by the key idea of the Parameterization Method: given an SDE as in \eqref{eq:SDE}, one seeks the phase-amplitude transformation for which the resulting dynamics are expressed as simply as possible. 
Since the change of variables for stochastic systems follows the It\^{o} rule, the deterministic phase-amplitude variables no longer evolve in a simple way. 
Moreover, the definition of the deterministic phase-amplitude functions is premised upon the existence of a limit cycle (LC).
A noise-perturbed LC is but one of several distinct mechanisms which may underlie the generation of stochastic oscillations \cite{perez2023universal}. 
These drawbacks require, in our view, the development of new phase-amplitude functions. 
As we have argued in the manuscript, the phase and amplitude functions overcoming these limitations correspond to two already known functions: the Mean--Return-Time (MRT) phase \cite{schwabedal2013phase,cao2020partial} and the stochastic amplitude \cite{perez2021isostables}. 
The existence of these two functions does not depend on the existence of an underlying deterministic limit cycle, but only on mild properties of the Kolmogorov backwards operator.\footnote{Also known as the generator of the Markov process, and as the stochastic Koopman operator \cite{mezic2005spectral,vcrnjaric2019koopman,perez2023universal}.} 
Moreover, we found that transforming to these two variables yielded drift terms that evolve in a similar manner as their deterministic analogues, thus satisfying our goal of rendering the dynamics ``as simply as possible."



Besides assembling in a different light previous results in the literature, in this paper we also presented new results obtained by means of this phase-amplitude construction. 
The idea of finding a vector field capturing how the noise \textit{effectively} modifies the underlying dynamics of the system to sustain oscillations was previously put forward in \cite{perez2021isostables}. 
In the present paper, we not only gave a precise definition, but also proved in Theorem~\ref{thm:bigthm} the existence and uniqueness of the vector field with the properties we defined. 
Moreover, we also presented new results on the higher moments of both the MRT and the isostable function (Eqs.~\eqref{eq:thetaVar} and \eqref{eq:sigmaVar}, respectively). 
From these formulas it was straightforward to obtain the effective diffusion constant of the unwrapped MRT phase (Eq.~\eqref{eq:phase-diff-cte}) and the stationary variance of the stochastic amplitude (Eq.~\eqref{eq:st-sigmaVar}). We have illustrated the meaning of the effective vector field and checked the validity of our results for three qualitatively different stochastic oscillators: a noisy spiral sink, a noisy LC and an excitable system.

\paragraph{Extension to higher dimensions}  
In the analysis of stochastic processes, one could view the construction of a martingale \cite{DitlevsenGreenwood2013JM,Williams1991probability} as playing a simplifying role comparable to the construction of phase-amplitude coordinates in a deterministic LC system.  
Indeed, after subtracting the mean rate of change of the unwrapped MRT phase, one is left with a balanced fluctuation, i.e.~$\E\left[d\left(\Theta-2\pi t/\tbar\right)\right]\equiv 0$, which is a martingale.
Similarly, after subtracting the mean decay rate of the isostable function, one is again left with a mean-zero fluctuation,
so that $\E\left[d\left(\BSigma(t)-e^{\lambda_{\text{Floq}} t}\BSigma(0)\right)\right]\equiv 0$ is also a martingale.
Thus, for planar systems, we have constructed two separate martingales that together give a complete description of the mean dynamics.
The extension of this framework to higher dimensional ($d\ge 3$) systems would require construction of additional martingales distinct from these first two.  
In the deterministic setting, the amplitude coordinates corresponding to additional Floquet modes provide the additional variables needed for a complete description \cite{perez2020global}.
In the stochastic setting, one may conjecture that the real eigenfunctions corresponding to more rapidly decaying modes may play a similar role, provided their eigenvalues are not just integer multiples of the leading negative eigenvalue.  
Alternatively, higher complex modes may provide the needed coordinates.  
But delicate issues arise.  
We note that in some cases, the leading complex eigenvalues $\mu\pm i\omega$ may be associated with a parabolic family of associated eigenvalues $k^2\mu\pm i k \omega$, cf.~Fig.~3 of \cite{thomas2014asymptotic}.
This set of eigenvalues would form a single ``family", the associated eigenfunctions of which would not necessarily provide suitable additional coordinates.  
In other cases, there may be more than one set of surfaces satisfying the mean--return-time criterion, with different mean periods \cite{thomas2015comment,ThomasLindner2015Reply-PhysRevLett.115.069402}.  
Many interesting open questions remain concerning the extension of the phase-amplitude framework to stochastic oscillators in higher dimensions.

\section*{Acknowledgments}
This work was supported in part  
by the Oberlin College Department of Mathematics.

\bibliographystyle{siamplain}
\bibliography{references}

\newpage

\appendix

\section{Numerical Details}

To generate the numerical results in the main text, we followed the procedure in \cite{cao2020partial, perez2023universal} (see also \cite{perez2021isostables} and \cite{perez2022quantitative}). Since the phase space for all the two-dimensional examples considered in this paper is unbounded, we consider a (finite) rectangular domain $\mathcal{X}$
\begin{equation}\label{eq:domainD_AP}
	\mathcal{X} = [ x_1^-, x_1^+ ] \times [x_2^-, x_2^+].   
\end{equation} 
whose size is chosen large enough so that the probability for individual trajectories $\mbx(t)$ to reach the boundaries is very low. Then, we just need to discretise the domain $\mathcal{X}$ in $N$ and $M$ points such that $\Delta x_1 = (x_{1}^+ - x_1^-)/N$ 
and $\Delta x_2 = (x_2^+ - x_2^-)/M$, to build $\LLd$ (and/or $\LL$) by using a standard finite-difference scheme. In general, we used centered finite differences, for instance, given a function $T(x_1, x_2)$
\begin{equation}
	(\partial_{x_1} T)_{i,j} = \frac{T_{i+1,j} - T_{i-1,j}}{2\Delta x_1}, \qquad  (\partial_{x_1x_1} T )_{i,j} = \frac{T_{i+1,j} - 2T_{i,j} + T_{i-1,j}}{(\Delta x_1)^2},
\end{equation}
and, as for boundary conditions, we used adjoint reflecting boundary conditions
\begin{equation}
    \sum_{j=1,2} n_j \sum_{k=1,2} \mathcal{G}_{jk} \partial_{x_k} T(x_1,x_2) = 0,
\end{equation}
where $n$ is the local unit normal vector at $\mathcal{X}$ boundaries and $\mathcal{G} = \frac12 \mbg\mbg^\intercal$ (see \eqref{eq:forward-eq}).


After diagonalizing the resulting $(N \cdot M, N \cdot M)$ matrix, we obtain the eigenvalues and the associated eigenfunctions of $\LLd$ ($\LL$). We recall that we are not interested in the complete spectrum of $\LLd$ ($\LL$). For $\LLd$ we just consider (and hence present in Fig.~\ref{fig:eigenvals}) the part of the spectrum which is relevant for our analysis. That is, we consider mainly the eigenvalue associated with the slowest decaying \textit{complex} eigenfunction $Q^*_1(\mbx)$, from which we get the stochastic asymptotic phase $\psi(\mbx) = \arg(Q^*_1(\mbx))$ and slowest decaying \textit{real} eigenfunction $\BSigma(\mbx)$, from which we get the stochastic asymptotic amplitude. Besides these prominent modes (and its corresponding forward modes) we also considered a few more pairs of backward and forward eigenmodes  for computing the transient behaviour of the variance equations \eqref{eq:thetaVar} and \eqref{eq:sigmaVar}.

Following the criterion we used in \cite{perez2023universal}, we normalised each pair of eigenfunctions $Q^*_\lambda(\mbx)$ and $ P_\lambda(\mbx)$  such that they satisfy the following conditions
\begin{equation}\label{eq:norm-conditions-AP}
     \int d\mbx~ |Q^*_\lambda(\mbx)|^2 P_0(\mbx) = 1, \qquad \qquad \int d\mbx~Q^*_{\lambda'}(\mbx)P_\lambda(\mbx)  =  \delta_{\lambda'\lambda},
\end{equation}
so, while the left integral fixes the normalisation of $Q^*_\lambda(\mbx)$ up to an arbitrary complex factor, the second integral fixes the norm and phase of $P_\lambda(\mbx)$.

\section{Obtaining the MRT phase $\Theta(\mbx)$ and $\tbar$}

In this paper, we have followed the procedure we used in \cite{perez2022quantitative} to obtain the MRT phase $\Theta$ by finding a state dependent shift $\Delta \psi(\mbx)$ to the stochastic asymptotic phase $\psi(\mbx)$. We recall we were obtaining $\psi(\mbx)$ as the argument of the Kolmogorov backwards operator complex eigenfunction with least negative real part. That is, having $\psi(\mbx)$ we look for the correction $\Delta \psi(\mbx)$ such that
\begin{equation*}
    \Theta(\mbx) = \psi(\mbx) + \Delta \psi(\mbx)
\end{equation*}
For the specific details about the computations leading to $\Delta \psi(\mbx)$ we refer the reader to \cite{perez2022quantitative}. 

As stated in the manuscript in \cite{cao2020partial} it is possible to find the sufficient conditions guaranteeing the existence of a well-defined mean--return-time period $\tbar$. In addition in this very same manuscript (and also in \cite{perez2022quantitative}) it is explained how to compute $\tbar$.

\section{Numerical Values}

In this section we gave all the relevant numerical values we used to generate the results in the manuscript. We first show the following Table~\ref{tab:table1} with the necessary values for reproducing the numerical discretisations of $\LLd (\LL)$, the obtained leading complex and real eigenvalues and the MRT period $\tbar$
\begin{table}[htbp]\label{tab:table1}
\footnotesize
    \centering
    \caption{Numerical values of the $\LLd (\LL)$ discretisations, the resulting leading complex ($\lambda_1$) and real ($\lambda_{\text{Floq}}$) eigenvalues and the MRT period $(\tbar)$ for the different stochastic oscillators.}
\begin{center}
	\begin{tabular}{cccccccccc}
		&$N$ &$M$ &$x_+$&
		$x_-$ &$y_+$ &$y_-$ &$\lambda_1$  &$\lambda_\text{Floq}$& $\tbar$\\
		\hline
		Sp. Sink & 250 & 250 & 0.75 & -0.75 & 0.75 & -0.75 & -0.1 + 0.5i & -0.2 & 4$\pi$ \\
		Hopf & 250 & 250 & 1.75 & -1.75 & 1.75 & -1.75 & -0.061+1.002i & -1.456 & 6.287 \\
		Excitable SNIC & 250 & 250 & 1.75 & -1.75 & 1.75 & -1.75 & -0.22 + 0.33i & -1.625 & 29.696 
	\end{tabular}
\end{center}
\end{table}

Next we present the values we obtained for computing the variances $\text{Var}(\Theta(\mbX))$ and $\text{Var}(\BSigma(\mbX))$ in \eqref{eq:thetaVar} and \eqref{eq:sigmaVar} respectively. We first define the following notation
\begin{equation}
\begin{aligned}
\alpha_{\lambda, \Theta} &= Q^*_\lambda(\mbx_0)\sum_{i,j} \int   P_\lambda(\mbx)  \mathcal{G}_{ij}(\mbx)\left(\frac{\partial \Theta(\mbx)}{\partial x_{i}}\frac{\partial \Theta(\mbx) }{\partial x_{j}} \right) d\mbx = Q^*_\lambda(\mbx_0) \beta_{\lambda, \Theta},\\
\alpha_{\lambda, \BSigma} &=  Q^*_\lambda(\mbx_0)\sum_{i,j} \int   P_\lambda(\mbx)  \mathcal{G}_{ij}(\mbx)\left(\frac{\partial \BSigma(\mbx)}{\partial x_{i}}\frac{\partial \BSigma(\mbx) }{\partial x_{j}} \right) d\mbx = Q^*_\lambda(\mbx_0) \beta_{\lambda, \BSigma},
\end{aligned}
\end{equation}
where we recall that $Q_0(\mbx) \equiv 1$ as a consequence of the normalisation condition \eqref{eq:norm-conditions-AP}-right. For these reason $\alpha_{0, \Theta} = \beta_{0, \Theta}$ and similarly $\alpha_{0, \BSigma} = \beta_{0, \BSigma}$. Next we discuss the values for each of the considered models.\\

\textbf{Spiral Sink} - For computing the curve $\text{Var}(\Theta(\mbX))$ in panel \ref{fig:focusPanel}, we  considered an initial value for the simulations of $\mbx_0 = (0.05, 0.05)$ and performed our simulations using three different cut-off radius $r_0 = [10^{-2}, 10^{-3}, 10^{-4}]$. For the three considered values of $r_0$, we computed the transient behaviour by taking into account the following set of eigenvalues $\Lambda = \{\lambda_{\text{Floq}} = -0.2, ~\lambda_{\text{Floq},2} = -0.4, ~\lambda_{\text{Floq},3} = -0.6\}$. Next we indicate the values of $\alpha_{\lambda, \Theta}$ for each value of $r_0$ considered. For $r_0 = 10^{-2}$, we obtained $\beta_{0, \Theta} = 0.248$, $\alpha_{\lambda_{\text{Floq}}, \Theta} = 0.158,~ \alpha_{\lambda_{\text{Floq},2}, \Theta} = 0.107,~ \alpha_{\lambda_{\text{Floq},3}, \Theta} = 0.0712$. For $r_0 = 10^{-3}$, we obtained $\beta_{0, \Theta} = 0.4775$, $\alpha_{\lambda_{\text{Floq}}, \Theta} = 0.342,~ \alpha_{\lambda_{\text{Floq},2}, \Theta} = 0.249,~ \alpha_{\lambda_{\text{Floq},3}, \Theta} = 0.177$. Finally, for $r_0 = 10^{-4}$, we obtained $\beta_{0, \Theta} = 0.7077$, $\alpha_{\lambda_{\text{Floq}}, \Theta} = 0.526,~ \alpha_{\lambda_{\text{Floq},2}, \Theta} = 0.392,~ \alpha_{\lambda_{\text{Floq},3}, \Theta} = 0.283$.

For computing the curve $\text{Var}(\BSigma(\mbX))$ in panel \ref{fig:focusPanel}, we obtained $\beta_{0, \BSigma} = 0.05$. In addition, we computed the transient behaviour by taking into account the following set of eigenvalues $\Lambda = \{\lambda_{\text{Floq}} = -0.2, ~\lambda_{\text{Floq},2} = -0.4\}$ for which we computed then, $\alpha_{\lambda_{\text{Floq}}, \BSigma} = -0.05,~ \alpha_{\lambda_{\text{Floq},2}, \BSigma} = -2.8\cdot10^{-5}$. In this case, the simulations considered an initial value of $\mbx_0 = (0.0, 0.0)$.\\

\textbf{Supercritical Hopf} - For computing the curve $\text{Var}(\Theta(\mbX))$ in panel \ref{fig:hopfPanel}, we obtained $\beta_{0, \Theta} = 0.068$. In addition, we did not found a relevant change in the transient when adding different eigenvalues. We considered an initial value for the simulations $\mbx_0 = (0, 1)$.

For computing the curve $\text{Var}(\BSigma(\mbX))$ in panel \ref{fig:hopfPanel}, we obtained $\beta_{0, \BSigma} = 0.0042$. In addition, we computed the transient behaviour by taking into account the following set of eigenvalues $\Lambda = \{\lambda_{2} = -0.22+2.01i,~ \lambda_{4} = -0.79+4.05i,~ \lambda_{\text{Floq}} = -1.459\}$ for which we computed then, $\alpha_{\lambda_{2}, \BSigma} = -0.00245+0.00055i,~ \alpha_{\lambda_{4}, \BSigma} = 0.0009-0.0005i,~  \alpha_{\lambda_{\text{Floq}}, \BSigma} = -0.0011$. In this case, the simulations considered an initial value of $\mbx_0 = (0, 1)$.\\

\begin{figure}[t]
	\centering\includegraphics[width=1\textwidth]{./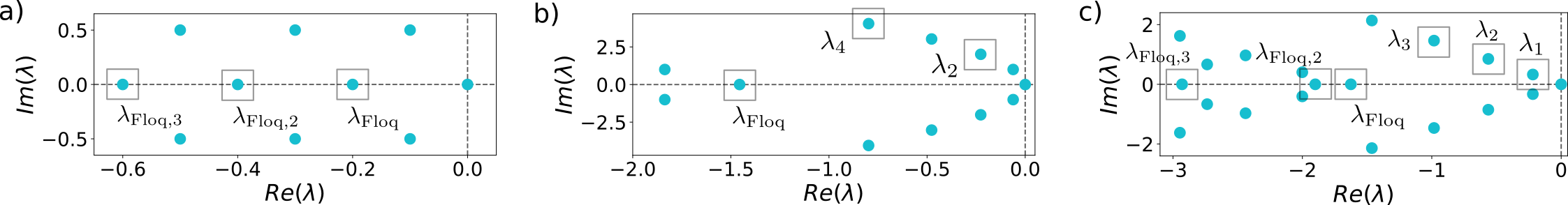}
	\caption{Eigenvalue spectra for the three considered models. a) The spiral sink. b) Supercritical Hopf bifurcation. c) The excitable SNIC. In all cases we highlight the eigenvalues we used for computing the transient behaviour of $\text{Var}(\Theta(\mbX))$ and $\text{Var}(\BSigma(\mbX))$ (see text for more details). }\label{fig:eigenvals}
\end{figure}

\textbf{Excitable SNIC} - For computing the curve $\text{Var}(\Theta(\mbX))$ in panel \ref{fig:snicPanel}, we obtained $\beta_{0, \Theta} = 0.257$. In addition, we computed the transient behaviour by taking into account the following set of eigenvalues $\Lambda = \{\lambda_{1} = -0.22+0.33i, ~\lambda_{2} = -0.56+0.85i, ~\lambda_{3} = -0.98+1.46i\}$ for which we computed then, $\alpha_{\lambda_{1}, \Theta} = 0.098-0.105i,~ \alpha_{\lambda_{2}, \Theta} = 0.017-0.04i,~ \alpha_{\lambda_{3}, \Theta} = 0.0052-0.011i$. We considered an initial value for the simulations of $\mbx_0 = (0, 1)$.

For computing the curve $\text{Var}(\BSigma(\mbX))$ in panel \ref{fig:snicPanel}, we obtained $\beta_{0, \BSigma} = 2.44\cdot10^{-5}$. In addition, we computed the transient behaviour by taking into account the following set of eigenvalues $\Lambda = \{\lambda_{\text{Floq}} = -1.625, ~\lambda_{\text{Floq},2} = -1.9, ~\lambda_{\text{Floq},3} = -2.93 \}$ and hence $\alpha_{\lambda_{\text{Floq}}, \BSigma} = -1.13\cdot10^{-4}, \alpha_{\lambda_{\text{Floq},2}, \BSigma} = 7.89\cdot10^{-5}, \alpha_{\lambda_{\text{Floq},3}, \BSigma} = 1.42\cdot10^{-5}$. In this case, the simulations considered an initial value of $\mbx_0 = (0, 1)$.

\end{document}